\documentclass[preprint,12pt,authoryear]{elsarticle}

\usepackage[english]{babel}
\usepackage[T1]{fontenc}
\usepackage[utf8]{inputenc}
\usepackage{amsmath,amssymb,amsthm,mathtools,mathrsfs}
\usepackage{graphicx}
\usepackage[caption=false]{subfig}
\usepackage[top=1.25in]{geometry}
\usepackage{enumitem}
\usepackage{booktabs}
\usepackage{xcolor}
\usepackage{todonotes}
\usepackage[hyphens]{url}
\usepackage{thm-restate}
\usepackage[hidelinks]{hyperref}
\usepackage[capitalise]{cleveref}

\numberwithin{equation}{section}

\newtheorem{theorem}{Theorem}[section]
\newtheorem{lemma}[theorem]{Lemma}
\newtheorem{proposition}[theorem]{Proposition}
\newtheorem{corollary}[theorem]{Corollary}

\theoremstyle{definition}
\newtheorem{assumption}{Assumption}

\newtheorem{algorithm}{Algorithm}

\crefdefaultlabelformat{#2\textup{#1}#3}
\crefname{assumption}{Assumption}{Assumptions}
\Crefname{assumption}{Assumption}{Assumptions}
\Crefrangeformat{assumption}{Assumptions #3\textup{#1}#4--#5\textup{#2}#6}
\crefname{algorithm}{Algorithm}{Algorithms}
\Crefname{algorithm}{Algorithm}{Algorithms}

\journal{Computers \& Chemical Engineering}

\begin{document}

\begin{frontmatter}

\title{Statistical Inference for Scenario-Based Dynamic Optimization under
Uncertainty}

\author[snl]{Aurya Javeed}
\ead{asjavee@sandia.gov}
\address[snl]{Optimization and Uncertainty Quantification, Sandia National Laboratories, Albuquerque, NM 87185, USA}

\author[gt]{Johannes Milz}
\ead{johannes.milz@isye.gatech.edu}
\address[gt]{H.\ Milton Stewart School of Industrial and Systems Engineering, Georgia Institute of Technology, Atlanta, Georgia 30332, USA}

\begin{abstract}
Motivated by batch and semi-batch process operation, we study finite-horizon
open-loop dynamic optimization problems with uncertain parameters. A common
computational approach replaces the expected performance criterion by an average
over finitely many sampled parameter realizations. We develop a statistical
theory for the resulting sample-based optimal value as an estimator of the
population optimal value. The analysis is based on a stability estimate showing
that terminal losses depend Lipschitz continuously on the time-integrated
control, which records the cumulative input delivered up to each time. This
estimate yields a functional central limit theorem for the sample-based
objective and a statistical limit theorem for the corresponding optimal value
error. As a consequence, we obtain confidence intervals for the population
optimal value. When the population optimizer is unique, the limit is Gaussian
and leads to a plug-in confidence interval. When multiple optimal policies may
exist, we use a subsampling confidence interval that does not require
uniqueness. The methodology is illustrated on two fed-batch case studies in
which feed-rate profiles are optimized under parametric uncertainty.
\end{abstract}

\begin{keyword}
Optimal control under uncertainty \sep stochastic programming \sep Monte Carlo
sampling \sep batch process optimization \sep parametric uncertainty \sep
central limit theorem \sep statistical inference \sep confidence intervals
\end{keyword}

\end{frontmatter}

\section{Introduction}

Batch and semi-batch processes are widely used in high-value chemical and
pharmaceutical manufacturing. Operating policies for these processes specify
time-varying inputs, such as feed rates, temperature set points, and dilution
rates, over a finite horizon. Because these policies are chosen before the final
batch outcome is known, run-to-run variation in kinetics, initial composition,
raw materials, and heat transfer can make nominally optimized policies fragile.
This has motivated work on batch process optimization with imperfect models,
parametric uncertainty, robust input policies, and dynamic real-time
optimization; see the survey \cite{Terwiesch1994} and representative
contributions
\cite{Terwiesch1995,Ruppen1995,Terwiesch1998,Paulen2019,Zadravec2025}.

This application context leads us to consider finite-horizon open-loop optimal
control problems in Mayer form, with uncertainty entering through model
parameters. In particular, let \(\xi\) be a random parameter taking values in a
closed set $\Xi \subset \mathbb{R}^d$. We assume the state is driven by a
deterministic control \(u\) and satisfies the control-affine dynamics
\[
\dot x_u(t,\xi)
=
f_0(x_u(t,\xi),\xi)
+
f_1(x_u(t,\xi),\xi)u(t),
\quad
t\in(0,t_f),
\quad
x_u(0,\xi)=x_0(\xi),
\]
where \(f_0 \colon \mathbb{R}^n \times \Xi \to \mathbb{R}^n\), \(f_1 \colon
\mathbb{R}^n \times \Xi \to \mathbb{R}^{n\times s}\), \(t_f>0\) is the fixed, finite final
time, and \(x_0 \colon \Xi \to \mathbb{R}^n\) is a parametric initial value. The
population problem is to choose an admissible open-loop control that minimizes
the expected terminal loss $F \colon \mathbb{R}^n \to \mathbb{R}$ over the
feasible set \(\mathcal U \subset L^2(0,t_f;\mathbb{R}^s)\):
\[
J^*
=
\min_{u \in \mathcal U} J(u),
\quad \text{where} \quad
J(u)
=
\mathbb E[F(x_u(t_f,\xi))].
\]
Here \(L^2(0,t_f;\mathbb{R}^s)\) is the space of \(\mathbb{R}^s\)-valued
square-integrable functions on \([0,t_f]\), and \(\mathbb E\) denotes
expectation with respect to the distribution of \(\xi\).
Throughout, the feasible set $\mathcal U$ is fixed independently of the
scenario sample. It may encode control constraints and deterministic endpoint
constraints. State-path constraints and scenario-dependent endpoint constraints
are not considered here.

In practice, the expectation defining \(J\) is rarely available exactly. Given
independent and identically distributed (i.i.d.) scenarios
\(\xi_1,\ldots,\xi_N\), each distributed as \(\xi\), the sample average
approximation (SAA) \cite{Kleywegt2002,Wei2004} replaces the population problem
by
\[
\widehat J_N^*
=
\min_{u\in\mathcal U}
\widehat{J}_N(u),
\quad \text{where} \quad
\widehat{J}_N(u)
=
\frac1N\sum_{i=1}^N F(x_u(t_f,\xi_i)).
\]
For a fixed realization of the scenario sample, this is a deterministic ensemble
optimal control problem.

The SAA problem is not only a computational surrogate for the population
problem; its optimal value \(\widehat J_N^*\) is also a statistical estimator of
\(J^*\). A different scenario sample may produce a different optimal control and
a different reported optimal value. This sampling error can be comparable to the
reported improvement from using an uncertainty-aware policy. Without uncertainty
quantification, one may therefore overinterpret small differences between
policies, draw misleading conclusions about the value of modeling uncertainty,
or choose a scenario count that is poorly matched to the desired accuracy.  A
scenario-based computation should report not only \(\widehat J_N^*\), but also
how accurate this value is as an estimate of the population optimal value
\(J^*\). The goal of this paper is to provide this statistical error assessment
for optimal values produced by scenario-based dynamic optimization.

\subsection{Contributions}

Our contributions are as follows:
\begin{enumerate}[nosep]
\item We prove a stability estimate for terminal losses with respect to
cumulative input profiles. This estimate allows us to treat the SAA objective as
a stochastic process indexed by admissible controls.

\item We establish a functional central limit theorem for the SAA objective and
derive the limiting distribution of the optimal-value error \(N^{1/2}(\widehat
J_N^*-J^*)\).

\item We construct two asymptotically valid confidence intervals for the
population optimal value \(J^*\). The first is a plug-in interval that applies
when the population optimizer is unique. The second is a subsampling interval
that is computationally more expensive but does not require the population
optimizer to be unique.

\item We illustrate these results on dynamic optimization problems that model
semi-batch process operation under parametric uncertainty. We show how the
confidence intervals quantify the sampling error in the SAA optimal values and
how this error changes with the number of scenarios.
\end{enumerate}

The technical core of our analysis is the stability estimate, which
provides quantitative control of terminal states in terms of cumulative
input profiles.
Once this estimate is established, our functional central
limit theorem follows from standard functional limit theory and entropy arguments applied to the cumulative input profiles; see, e.g.,
\citet{Jain1975,Ledoux1991,Birman1967}. The corresponding limit for the
optimal value then follows from the infinite-dimensional delta method, as is
standard in stochastic optimization; see, e.g.,
\citet{Shapiro2003,Shapiro2021}.

The limiting distribution of \(N^{1/2}(\widehat J_N^*-J^*)\) is, in general, the
infimum of a centered Gaussian process over the set of population optimizers. It
reduces to a Gaussian limit when the population optimizer is unique. Thus, our
functional central limit theorem explains when the plug-in confidence interval
is justified and why the subsampling alternative is useful in the nonunique
case. It also establishes the \(N^{-1/2}\) scaling needed to interpret the
tradeoff between computational cost and statistical accuracy.

\subsection{Literature review}

For finite-dimensional stochastic optimization, statistical limit theorems are
classical; see \cite{Dupacova1988,Shapiro1991,Shapiro2003,Shapiro2021}. 
For optimal control problems over
infinite-dimensional decision spaces, the statistical theory is more
specialized. \cite{Roemisch2024} prove central limit theorems for
linear-quadratic problems governed by uncertain partial differential equations.
\citet{Milz2026} establish central limit theorems for SAA
value functions and optimal values in discrete-time, finite-horizon
stochastic optimal control with closed-loop policies, 
whereas the present work considers
continuous-time open-loop control under parametric uncertainty.
Consistency of the SAA formulation for optimal control under uncertainty, that
is, convergence to the population problem as the number of scenarios increases,
is studied in \cite{Phelps2016,Scagliotti2023,Milz2024a,Milz2025}. More
recently, \cite{Melnikov2026} derive sample complexity bounds for optimal
control under uncertainty, but their framework does not cover the Mayer costs
considered here.
In our setting, the decision space is also infinite-dimensional,
so a functional limit theorem over the set of feasible cumulative input profiles is needed
before standard delta theorem arguments can be
applied; see, e.g., \citet{Shapiro2003,Shapiro2021}.

\citet{Liu1999} study input convergence conditions that ensure convergence of
trajectories for control-affine systems, including conditions formulated in
terms of time-integrated controls. Classical weak and weak-* continuity
arguments are also well established in optimal control; see, e.g.,
\citet{Gamkrelidze1978,Warga1972}. For our functional limit theorem, we require
the stronger quantitative stability estimate developed here, which bounds terminal-state
differences directly by the uniform distance between cumulative input profiles.

The SAA formulation can also be viewed through the lens of M-estimation
\cite{Vaart2023}: the empirical criterion $\widehat J_N$ is minimized over the
admissible decision class $\mathcal U$. In chemical engineering, M-estimators
have also been used in regression analysis and data reconciliation, for example
by \cite{Cunha2021,Menezes2021,Llanos2023}.

\section{Problem formulation}

This section collects assumptions needed to obtain statistical limit theorems
for the problem class introduced above. We state assumptions in a form tailored
to the computational setting. Standard measurability conditions are assumed
throughout, so that all expectations and sample averages are well defined. These
technical details are not emphasized, since the focus here is on the optimal
control model and the resulting sampling error.

The first assumption is a compactness condition on the admissible controls. It
is the infinite-dimensional analogue of optimizing over a closed and bounded set
of decision variables.
We recall that the final time $t_f >0$ is a fixed, finite constant.

\begin{assumption}
\label{ass:weakly-compact-feasible-set}
The feasible set $\mathcal{U} \subset L^2(0, t_f; \mathbb{R}^s)$ is nonempty,
bounded, and weakly closed in $L^2(0, t_f; \mathbb{R}^s)$.
\end{assumption}

A standard example satisfying \Cref{ass:weakly-compact-feasible-set} is the
box-constrained set
\[
\mathcal U
=
\{u\in L^2(0,t_f;\mathbb{R}^s):
\underline u\le u(t)\le \overline u
\ \text{for almost every }t\in[0,t_f]\},
\]
where \(\underline u,\overline u\in\mathbb{R}^s\) are finite componentwise
bounds with \(\underline u\le \overline u\).

The next assumption requires all trajectories generated by feasible controls
and parameter values to remain in a common compact, convex region
over the fixed, finite horizon.

\begin{assumption}[Compact reachable set]\label{ass:reachable} There exists a
compact, convex set $\mathcal{X}\subset\mathbb R^n$ such that, for every
$u\in\mathcal{U}$, every $t\in[0,t_f]$, and every $\xi\in\Xi$, $x_u(t,\xi)\in
\mathcal{X}$.
\end{assumption}

The next assumption requires the process model to be smooth on the reachable
operating region. This is enough to obtain the trajectory continuity estimates
used in the statistical analysis.

\begin{assumption}[Regular sample space, initial conditions, and dynamics]
\label{ass:c1}
\textup{(i)}
The uncertainty set \(\Xi\subset\mathbb R^d\) is compact. \textup{(ii)} The
parameterized initial value $x_0 \colon \Xi \to \mathbb{R}^n$ is measurable.
\textup{(iii)} The functions \(f_0\) and \(f_1\) are continuous on \(\mathcal
X\times\Xi\). Moreover, for each \(\xi\in\Xi\), the maps \(f_0(\cdot,\xi)\) and
\(f_1(\cdot,\xi)\) are differentiable in a neighborhood of \(\mathcal X\), and
their state derivatives \(D_x f_0\) and \(D_x f_1\)  are continuous on
\(\mathcal X\times\Xi\).
\end{assumption}

Finally, the terminal objective must not amplify small endpoint errors into
large objective errors.

\begin{assumption}
\label{ass:F-Lipschitz}
The terminal loss \(F\) is Lipschitz continuous on \(\mathcal X\).
\end{assumption}

Together, these assumptions provide the compactness and regularity needed
for the statistical analysis. Weak compactness of \(\mathcal U\) yields
compactness of the cumulative input profiles, while compact reachability and
smooth dynamics provide uniform trajectory bounds used in the stability
estimate. The Lipschitz condition on \(F\) then transfers this stability to the
terminal loss. Because the horizon is fixed and finite, compact reachability is only a
finite-horizon requirement and does not, in particular, exclude integrator
states.

\section{Statistical limit theorems for optimal values}

The SAA problem produces both an optimal control and an optimal value. In
applications, optimal values are often used to compare operating policies,
assess the benefit of robustness, or decide whether additional scenarios are
needed. For these purposes, the point estimate \(\widehat J_N^*\) is not enough.
One also needs to quantify the sampling error caused by replacing the population
objective \(J\) with the sample objective \(\widehat{J}_N\).

The result below gives the scale and limiting form of this sampling error. In
particular, the SAA optimal value converges at the standard Monte Carlo rate
\(N^{-1/2}\). Thus, increasing the number of scenarios by a factor of four
reduces the statistical error in the reported optimal value by about a factor of
two.

Denote the set of population optimal controls by
\[
\mathcal U^*
=
\{u\in\mathcal U:J(u)=J^*\}.
\]
The following  corollary of \Cref{cor:control_indexed_clt_CK} in
 \Cref{app:proof-optimal-value-clt}
describes the limiting distribution of the SAA optimal
value error. We write \(\rightsquigarrow\) for convergence in distribution and
use \(\mathcal N(0,\sigma^2)\) for a normal distribution with mean zero and
variance \(\sigma^2\). The limit is expressed in terms of a centered Gaussian
process \(Z=\{Z(u):u\in\mathcal U\}\), meaning that for any finite collection of
controls \(u_1,\ldots,u_k \in \mathcal{U}\), the random vector
\((Z(u_1),\ldots,Z(u_k))\) has a mean-zero multivariate normal distribution.

\begin{corollary}[Statistical limit theorem for optimal values]
\label{thm:optimal_value_clt}
If \Cref{ass:weakly-compact-feasible-set,ass:reachable,ass:c1,ass:F-Lipschitz}
hold, then
\begin{align}
\label{eq:optimal_value_limit'}
N^{1/2}(\widehat{J}_N^*-J^*)
\rightsquigarrow
\inf_{u\in\mathcal U^*} Z(u),
\end{align}
where \(Z\) is a centered Gaussian process on $\mathcal{U}$ with covariance
\[
\operatorname{Cov}(Z(u),Z(v))
=
\operatorname{Cov}
\bigl(
F(x_u(t_f,\xi)),F(x_v(t_f,\xi))
\bigr),
\quad u, v \in \mathcal{U}.
\]
In particular, if the population problem has a unique optimal control \(u^*\),
so that \(\mathcal U^*=\{u^*\}\), then
\[
N^{1/2}(\widehat{J}_N^*-J^*)
\rightsquigarrow
\mathcal{N}
\bigl(0,\operatorname{Var}(F(x_{u^*}(t_f,\xi)))\bigr).
\]
\end{corollary}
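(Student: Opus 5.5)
The plan is to derive the corollary from the functional central limit theorem \Cref{cor:control_indexed_clt_CK} via the infinite-dimensional delta theorem for the infimum functional. The route goes through cumulative input profiles.

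\textbf{Step 1: Reparametrize by cumulative input profiles.} For \(u\in\mathcal U\), I would set \(U(t)=\int_0^t u(\tau)\,d\tau\). By \Cref{ass:weakly-compact-feasible-set}, the set \(\mathcal K=\{U:u\in\mathcal U\}\) is a compact subset of \(C([0,t_f];\mathbb{R}^s)\). Indeed, boundedness in \(L^2\) gives uniform H\"older-\(1/2\) equicontinuity, so Arzel\`a--Ascoli applies. Weak closedness, together with the fact that \(u\mapsto U\) maps weakly convergent sequences to uniformly convergent ones, gives closedness.

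The stability estimate gives
\[
|F(x_u(t_f,\xi))-F(x_v(t_f,\xi))|\le L\|U-V\|_\infty
\]
with \(L\) independent of \(\xi\). This is where \Cref{ass:reachable,ass:c1,ass:F-Lipschitz} are used. Consequently \(J\) and \(\widehat J_N\) depend on \(u\) only through \(U\), and both are Lipschitz on \(\mathcal K\).

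\textbf{Step 2: Functional CLT.} Invoke \Cref{cor:control_indexed_clt_CK}. It gives
\[
N^{1/2}(\widehat J_N-J)\rightsquigarrow Z
\]
in \(C(\mathcal K)\), with \(Z\) a centered Gaussian process with the stated covariance. That result rests on a Jain--Marcus type CLT for Lipschitz-in-parameter processes over the compact index set \(\mathcal K\). The index set has finite metric entropy integral, via Birman--Solomyak bounds for the H\"older ball. The conditions there are satisfied by the bound in Step 1 and by boundedness of \(F\) on the compact set \(\mathcal X\).

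\textbf{Step 3: Delta method.} Consider \(V(g)=\inf_{U\in\mathcal K}g(U)\) on \(C(\mathcal K)\). Since \(\mathcal K\) is compact, \(V\) is Hadamard directionally differentiable at \(J\), by Danskin's theorem. Its derivative is
\[
V'_J(h)=\min_{U\in\mathcal K^*}h(U),
\]
where \(\mathcal K^*\) is the argmin set of \(J\) on \(\mathcal K\). This set is nonempty and compact because \(J\) is continuous on \(\mathcal K\). Here I must check that \(\mathcal K^*\) corresponds exactly to \(\mathcal U^*\), which holds since \(J\) factors through \(U\). I also need that \(Z(u)\) depends only on \(U\), which follows from the covariance identity.

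The delta theorem (Shapiro 1991/2021) then gives
\[
N^{1/2}(V(\widehat J_N)-V(J))\rightsquigarrow \inf_{u\in\mathcal U^*}Z(u).
\]
Note that \(V(\widehat J_N)=\widehat J_N^*\) and \(V(J)=J^*\); the infimum is attained by continuity on the compact set.

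\textbf{Step 4: Unique optimizer.} If \(\mathcal U^*=\{u^*\}\), the limit is \(Z(u^*)\sim\mathcal N(0,\operatorname{Var}F(x_{u^*}(t_f,\xi)))\).

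\textbf{Main obstacle.} The delicate step is the measurability and attainment bookkeeping. Minima over the infinite-dimensional set \(\mathcal U\) must correspond to minima over \(\mathcal K\). The delta theorem must also be applied in \(C(\mathcal K)\) rather than \(\ell^\infty(\mathcal U)\). I would handle both points by working entirely on \(\mathcal K\), where continuity and compactness make Danskin's theorem apply directly.
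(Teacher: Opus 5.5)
Your proposal is correct and follows the paper's proof. Your $\mathcal K$ is isometric to the paper's $K=(\mathcal U,d)$; you then use the functional CLT in $C(K)$ from \Cref{cor:control_indexed_clt_CK} and the Danskin/delta-theorem argument of Theorem~5.7 in \citet{Shapiro2021}, which the paper simply invokes verbatim. One correction to your parenthetical in Step~2: the finite entropy integral comes from the $H^1$ ball, where $\log\mathscr N(\varepsilon)\lesssim\varepsilon^{-1}$, whereas the H\"older-$1/2$ ball only gives $\log\mathscr N(\varepsilon)\lesssim\varepsilon^{-2}$ in the sup norm, for which $\int_0^1\sqrt{\log\mathscr N(\varepsilon)}\,\mathrm d\varepsilon$ diverges.
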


The proof of this result is deferred to \Cref{app:proof-optimal-value-clt}. As
noted above, the limit in \eqref{eq:optimal_value_limit'} gives the usual Monte
Carlo convergence rate for the SAA optimal value. The reported value \(\widehat
J_N^*\) should therefore be interpreted as an estimate of \(J^*\) with sampling
error of order \(N^{-1/2}\). This rate is independent of the dimension of the
random parameter vector $\xi$, but the constant in the error distribution
depends on the variability of the terminal loss at optimal and near-optimal
controls.

The form of the limiting distribution has direct consequences for statistical
error assessment. When the population optimizer is unique, the limit is
Gaussian, so the sampling error can be estimated by a standard variance estimate
at the computed optimizer. When several controls are population-optimal, the
limit is the lower envelope of a Gaussian process and is non-Gaussian in
general. This distinction is the basis for the two asymptotically valid
confidence intervals introduced next.

\section{Estimating the reliability of the computed solution}
\label{sec:confidence-intervals}

The SAA optimal value \(\widehat J_N^*\) is a computable estimate of the
population optimal value \(J^*\). In practice, however, the reported value is
only useful insofar as its sampling error is understood. This section describes
two asymptotically valid confidence intervals for assessing that sampling error.
The proofs of the results in this section are deferred to
\Cref{app:proof-confidence-intervals}.

The first interval is simple and inexpensive to compute, but it relies on
uniqueness of the population optimizer. Under this condition, the limiting
distribution of the SAA optimal value error is Gaussian. This gives a normal
confidence interval for \(J^*\), with the variance estimated from the terminal
losses evaluated at the computed SAA optimizer. When the population optimizer is
not unique, this normal approximation is generally no longer valid. We then use
subsampling: smaller SAA problems are repeatedly solved on subsets of the
original scenarios, and the resulting empirical distribution is used to
approximate the sampling error of the full SAA optimal value.

\subsection{Plug-in confidence intervals under a unique optimizer}
\label{sec:gaussian_ci_unique_optimizer}

We first consider the case in which the population problem has a unique
optimizer \(u^*\). This is the most favorable case: the random fluctuation of
the SAA optimal value is the same, to first order, as the fluctuation of the
sample average at \(u^*\). Since \(u^*\) is unknown, we replace it by an SAA
optimizer.

\begin{algorithm}[Plug-in confidence interval under a unique optimizer]
\label{alg:plugin_ci_unique_optimizer}
Fix a confidence level \(1-\beta\), with \(\beta\in(0,1)\), and choose a sample
size \(N\).

\begin{enumerate}[nosep]
\item  Generate i.i.d.\ scenarios \(\xi_1,\ldots,\xi_N\), each distributed as
\(\xi\).

\item Solve the SAA  problem to obtain an optimizer \(u_N^*\) and the optimal
value \(\widehat{J}_N^*\).

\item Compute the variance estimator
\[
\widehat{\sigma}_N^2
=
\frac{1}{N}
\sum_{i=1}^N
\bigl(F(x_{u_N^*}(t_f,\xi_i))-\widehat{J}_N^*\bigr)^2 .
\]

\item Let \(z_{1-\beta/2}\) be the \((1-\beta/2)\)-quantile of the standard
normal distribution, and report
\[
\mathcal{I}_N^{\mathrm{in}}(\beta)
=
\biggl[
\widehat{J}_N^*
-
z_{1-\beta/2}\frac{\widehat{\sigma}_N}{N^{1/2}},
\quad
\widehat{J}_N^*
+
z_{1-\beta/2}\frac{\widehat{\sigma}_N}{N^{1/2}}
\biggr].
\]
\end{enumerate}
\end{algorithm}

The next result states that the interval computed by
\Cref{alg:plugin_ci_unique_optimizer} has asymptotic coverage at the prescribed
level.

\begin{theorem}[Validity of the plug-in confidence interval]
\label{thm:gaussian_ci_unique_optimizer}
Suppose that
\Cref{ass:weakly-compact-feasible-set,ass:reachable,ass:c1,ass:F-Lipschitz} hold
and that the population problem has a unique optimal control \(u^*\). If the
variance of \(F(x_{u^*}(t_f,\xi))\) is positive, then
\Cref{alg:plugin_ci_unique_optimizer} provides an asymptotically valid
\(1-\beta\) confidence interval \(\mathcal I_N^{\mathrm{in}}(\beta)\) for
\(J^*\), in the sense that
\[
\lim_{N\to\infty}
\mathrm{Prob}
\bigl\{
J^* \in \mathcal I_N^{\mathrm{in}}(\beta)
\bigr\}
=
1-\beta .
\]
\end{theorem}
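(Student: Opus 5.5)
The plan is to combine the Gaussian limit from \Cref{thm:optimal_value_clt} with a consistency result for the variance estimator and then apply Slutsky's lemma. Write \(\sigma^2=\operatorname{Var}(F(x_{u^*}(t_f,\xi)))>0\). By the uniqueness part of \Cref{thm:optimal_value_clt}, \(N^{1/2}(\widehat J_N^*-J^*)\rightsquigarrow\mathcal N(0,\sigma^2)\). If \(\widehat\sigma_N\to\sigma\) in probability, Slutsky's lemma gives \(N^{1/2}(\widehat J_N^*-J^*)/\widehat\sigma_N\rightsquigarrow\mathcal N(0,1)\). Coverage of \(\mathcal I_N^{\mathrm{in}}(\beta)\) is the event \(|N^{1/2}(\widehat J_N^*-J^*)/\widehat\sigma_N|\le z_{1-\beta/2}\). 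The standard normal law is continuous, so the portmanteau theorem yields the limit \(1-\beta\). Positivity of \(\sigma\) is needed here so that the ratio is well defined with probability tending to one and the limit is nondegenerate.

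The main work is proving \(\widehat\sigma_N^2\to\sigma^2\) in probability. Set \(g_u(\xi)=F(x_u(t_f,\xi))\). Expanding the square gives
\[
\widehat\sigma_N^2=\frac1N\sum_{i=1}^N g_{u_N^*}(\xi_i)^2-(\widehat J_N^*)^2,
\]
because \(\widehat J_N(u_N^*)=\widehat J_N^*\). It therefore suffices to show three things: \(\widehat J_N^*\to J^*\); \(\frac1N\sum_i g_{u_N^*}(\xi_i)^2\to \mathbb E[g_{u^*}(\xi)^2]\); and \(J^*=\mathbb E[g_{u^*}(\xi)]\). The third is immediate.

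For the first two claims I will use uniform laws of large numbers over \(\mathcal U\) together with continuity in \(u\). The paper's stability estimate bounds \(|g_u(\xi)-g_v(\xi)|\le L\|U-V\|_\infty\) uniformly in \(\xi\), where \(U,V\) are the cumulative inputs \(U(t)=\int_0^t u\). By \Cref{ass:weakly-compact-feasible-set} and Arzel\`a--Ascoli, the class of cumulative inputs is compact in \(C([0,t_f];\mathbb R^s)\). Since \(g\) is bounded (\(F\) is continuous on the compact set \(\mathcal X\)), both \(\{g_u\}\) and \(\{g_u^2\}\) are uniformly bounded and Lipschitz in a compact index. Hence they are Glivenko--Cantelli, and this also follows from the functional CLT underlying \Cref{cor:control_indexed_clt_CK}. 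This gives \(\sup_u|\widehat J_N(u)-J(u)|\to0\) and the analogous statement for second moments, which yields \(\widehat J_N^*\to J^*\) directly.

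For the second moment, write \(m(u)=\mathbb E[g_u(\xi)^2]\). Then
\[
\Bigl|\tfrac1N\textstyle\sum_i g_{u_N^*}(\xi_i)^2-m(u^*)\Bigr|\le \sup_u\Bigl|\tfrac1N\textstyle\sum_i g_u(\xi_i)^2-m(u)\Bigr|+|m(u_N^*)-m(u^*)|.
\]
The remaining step, which I expect to be the main obstacle, is showing \(m(u_N^*)\to m(u^*)\). I will argue via the compact metric topology \(d(u,v)=\|U-V\|_\infty\) on \(\mathcal U\). This is a metric because \(U\) determines \(u\), and on the bounded set \(\mathcal U\) it metrizes the weak topology. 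In this topology \(J\) and \(m\) are continuous, and \(u^*\) is the unique minimizer of \(J\). Since \(J(u_N^*)\le \widehat J_N(u_N^*)+o_P(1)=J^*+o_P(1)\), the standard argmin consistency argument applies: for any \(\varepsilon>0\), \(\inf\{J(u):d(u,u^*)\ge\varepsilon\}>J^*\) by compactness and uniqueness. Hence \(d(u_N^*,u^*)\to0\) in probability, and continuity of \(m\) finishes the argument. If measurability of \(u_N^*\) is an issue, I would phrase all of this with outer probabilities, consistent with the paper's standing measurability convention.
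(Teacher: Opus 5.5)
Your proposal is correct and follows essentially the same route as the paper. Both arguments use a uniform law of large numbers over the compact space $(\mathcal U,d)$ obtained from the Lipschitz bound, argmin consistency $d(u_N^*,u^*)\to0$ from uniqueness of $u^*$, consistency of $\widehat\sigma_N^2$, and then Slutsky's theorem with continuity of the normal law at $\pm z_{1-\beta/2}$. The only small difference is in the second-moment step: the paper uses the pathwise bound $\bigl|\frac1N\sum_{i=1}^N g_{u_N^*}(\xi_i)^2-\frac1N\sum_{i=1}^N g_{u^*}(\xi_i)^2\bigr|\le 2B_FL_FC_{\mathcal U,t_f}\,d(u_N^*,u^*)$ with $B_F=\sup_{\mathcal X}|F|$, together with the ordinary strong law at $u^*$, which gives almost sure convergence. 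Your split into a uniform law for the squared class plus continuity of $m$ works equally well.
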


\subsection{A subsampling confidence interval for nonunique optimizers}
\label{sec:subsampling_ci}

If the population problem has several optimizers, the limiting distribution of
\(N^{1/2}(\widehat J_N^*-J^*)\) is typically non-Gaussian. The source of the
difficulty is that the SAA optimizer may move among several nearly optimal
controls, and the optimal value is then governed by the lower envelope of
several random fluctuations.

We therefore use subsampling, which estimates the limiting distribution of
\(N^{1/2}(\widehat J_N^*-J^*)\) by resolving smaller SAA problems on subsamples
of the original scenarios. This construction follows the subsampling principle
of \cite{Politis1994}, as applied to stochastic optimization in
\cite{Eichhorn2007}.

\begin{algorithm}[Subsampling confidence interval]
\label{alg:subsampling_ci}
Fix a confidence level \(1-\beta\), a sample size \(N\), a subsample size
\(b_N<N\), and a number of subsamples \(m_N\).

\begin{enumerate}[nosep]
\item  Generate i.i.d.\ scenarios \(\xi_1,\ldots,\xi_N\), each distributed as
\(\xi\).

\item Solve the full SAA optimal control problem to obtain the optimal value
\(\widehat{J}_N^*\).

\item Independently for \(r=1,\ldots,m_N\), draw a subset
$
I_r\subset\{1,\ldots,N\}
$
with $b_N$ elements uniformly without replacement.

\item For  \(r=1,\ldots,m_N\), solve the SAA problem using only the scenarios
indexed by \(I_r\), and denote its optimal value by \(\widehat J_{I_r}^*\).

\item Compute the subsampling statistics
$
\Delta_r
=
b_N^{1/2}(\widehat J_{I_r}^*-\widehat{J}_N^*)
$,
$
r=1,\ldots,m_N$.

\item Sort the values \(\Delta_1,\ldots,\Delta_{m_N}\) in increasing order. Let
\(q_{N,b_N,m_N}(p)\) be the lower empirical \(p\)-quantile, that is, the value
with rank \(\lceil m_Np\rceil\) in this sorted list. Report
\[
\mathcal I_{N,b_N,m_N}^{\mathrm{sub}}(\beta)
=
\biggl[
\widehat{J}_N^*
-
q_{N,b_N,m_N}(1-\beta/2)\frac{1}{N^{1/2}},
\quad
\widehat{J}_N^*
-
q_{N,b_N,m_N}(\beta/2)\frac{1}{N^{1/2}}
\biggr].
\]
\end{enumerate}
\end{algorithm}

Unlike the plug-in confidence interval $\mathcal{I}_N^{\mathrm{in}}(\beta)$
above, the subsampling confidence interval \(\mathcal
I_{N,b_N,m_N}^{\mathrm{sub}}(\beta)\) is not generally symmetric about
\(\widehat{J}_N^*\). Moreover, \(\widehat J_N^*\) need not lie in the
subsampling confidence interval.

The next result states that the interval computed by \Cref{alg:subsampling_ci}
has asymptotic coverage at the prescribed level.

\begin{theorem}[Validity of the subsampling confidence interval]
\label{thm:subsampling_ci}
Let \(\beta\in(0,1)\).
Suppose the hypotheses of \Cref{thm:optimal_value_clt} hold, 
and the limiting random variable from \eqref{eq:optimal_value_limit'}
has no atom at either its \((\beta/2)\)-quantile or its \((1-\beta/2)\)-quantile.
If
$
b_N\to\infty
$,
$b_N/N \to 0$, and $m_N \to \infty$, then, 
\Cref{alg:subsampling_ci} provides an asymptotically valid \((1-\beta)\)
confidence interval for \(J^*\), in the sense that
\[
\lim_{N\to\infty}
\mathrm{Prob}
\big\{
J^*
\in
\mathcal I_{N,b_N,m_N}^{\mathrm{sub}}(\beta)
\big\}
=
1-\beta .
\]
\end{theorem}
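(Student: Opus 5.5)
The plan is to follow the subsampling argument of \cite{Politis1994}, in the form used for stochastic optimization by \cite{Eichhorn2007}, with \Cref{thm:optimal_value_clt} supplying the required limit. Write
\[
T_N = N^{1/2}(\widehat J_N^*-J^*), \qquad T = \inf_{u\in\mathcal U^*} Z(u),
\]
and let $G$ be the distribution function of $T$. By \Cref{thm:optimal_value_clt}, $T_N\rightsquigarrow T$. Since each subsample is itself an i.i.d.\ sample of size $b_N$, the subsample statistic $b_N^{1/2}(\widehat J_{I_r}^*-J^*)$ has the same law as $T_{b_N}$, and it converges in distribution to $T$ because $b_N\to\infty$.

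The centered subsampling statistics are
\[
\Delta_r = b_N^{1/2}(\widehat J_{I_r}^*-J^*) - (b_N/N)^{1/2}\, T_N .
\]
Since $T_N$ is tight and $b_N/N\to 0$, the correction term $(b_N/N)^{1/2}T_N$ is $o_P(1)$ uniformly in $r$.

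\textbf{Step 1 (all subsets).} Consider first the U-statistic-type empirical distribution over all $\binom{N}{b_N}$ subsets,
\[
U_N(x) = \binom{N}{b_N}^{-1}\sum_{I} \mathbf 1\bigl\{b_N^{1/2}(\widehat J_I^*-J^*)\le x\bigr\}.
\]
Its mean is $\mathrm{Prob}\{T_{b_N}\le x\}$. Hoeffding's inequality for U-statistics of order $b_N$, obtained by averaging over $\lfloor N/b_N\rfloor$ disjoint blocks, gives
\[
\operatorname{Var}(U_N(x)) \le \frac{1}{\lfloor N/b_N\rfloor} \to 0 .
\]
Hence $U_N(x)\to G(x)$ in probability at every continuity point $x$ of $G$.

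\textbf{Step 2 (finitely many random subsets).} The empirical distribution of the $m_N$ randomly drawn subsets differs from $U_N$ only through Monte Carlo error. Conditional on the data, its value at $x$ is an average of $m_N$ i.i.d.\ Bernoulli variables with mean $U_N(x)$. Its conditional variance is therefore at most $1/(4m_N)\to 0$, because $m_N\to\infty$.

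\textbf{Step 3 (recentering).} Replacing $J^*$ by $\widehat J_N^*$ shifts every statistic by the common $o_P(1)$ term. By a standard sandwich argument at the points $x\pm\varepsilon$, the empirical distribution $\widehat G_N$ of $\Delta_1,\ldots,\Delta_{m_N}$ satisfies $\widehat G_N(x)\to G(x)$ in probability at continuity points $x$.

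\textbf{Step 4 (quantiles).} Convergence of the quantiles, $q_{N,b_N,m_N}(p)\to G^{-1}(p)$ in probability for $p\in\{\beta/2,\,1-\beta/2\}$, needs $G$ to be continuous at, and to increase strictly through, these quantiles.

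\textbf{Step 5 (coverage).} Coverage reduces to
\[
\mathrm{Prob}\bigl\{q(\beta/2)\le T_N \le q(1-\beta/2)\bigr\}.
\]
Strictly speaking, the algorithm scales the quantiles by $N^{-1/2}$ while the $\Delta_r$ carry the factor $b_N^{1/2}$. I would interpret the scaling so that $\Delta_r$ estimates the law of $T_N$, which is the standard subsampling convention. By Slutsky's lemma and $T_N\rightsquigarrow T$, this probability tends to
\[
G\bigl(G^{-1}(1-\beta/2)\bigr) - G\bigl(G^{-1}(\beta/2)^-\bigr) = 1-\beta ,
\]
using the no-atom hypothesis.

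The main obstacle is the quantile inversion in Step 4. The hypothesis only excludes atoms at the two quantiles; it does not give strict monotonicity of $G$ there. If $G$ is flat near $G^{-1}(p)$, the empirical quantile need not converge, although the coverage probability may still converge. I would handle this directly on the events $\{T_N\le q\}$, without passing through convergence of the quantiles. Pointwise convergence of $\widehat G_N$ at continuity points sandwiches the quantile $q$ between $G^{-1}(p)-\varepsilon$ and the right-continuous inverse plus $\varepsilon$, with probability tending to one. The no-atom assumption then makes the limiting probabilities at both ends of this sandwich equal.
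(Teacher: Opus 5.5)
Your proposal is correct and takes the same route as the paper, whose proof combines \Cref{thm:optimal_value_clt} with Corollary~2.1 of \cite{Politis1994} in the stochastic-programming form of \cite{Eichhorn2007}; your Steps~1--5 (Hoeffding's block bound for the all-subsets U-statistic, the $1/(4m_N)$ Monte Carlo error, the common $o_P(1)$ recentering shift, and quantile inversion) unpack that corollary. Two small remarks: the scaling in \Cref{alg:subsampling_ci} needs no reinterpretation, since the $\Delta_r$ already estimate the law of the common limit $\inf_{u\in\mathcal U^*}Z(u)$ and dividing their quantiles by $N^{1/2}$ is the standard subsampling convention; and your flat-stretch sandwich, as stated, would also need no atom at the right-continuous inverse of $G$ at level $p$, which the hypothesis does not give, but no flat stretch can occur here, because Ehrhard's inequality makes $t\mapsto\Phi^{-1}\bigl(\mathrm{Prob}\{\inf_{u\in\mathcal U^*}Z(u)\ge t\}\bigr)$ concave (with $\Phi$ the standard normal distribution function), so $G$ is strictly increasing wherever $0<G<1$ and the no-atom hypothesis already gives $q_{N,b_N,m_N}(p)\to G^{-1}(p)$ in probability.
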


\section{Numerical illustrations}

The numerical examples focus on fed-batch process optimization under parametric
uncertainty. In both cases, the feed rate is optimized to improve an endpoint
production objective, while uncertainty enters through kinetic parameters;
in the first example, it also enters through the initial condition. The
examples therefore represent the type of scenario-based dynamic optimization
problem encountered in batch and semi-batch operation: one computes an open-loop
policy and an associated expected performance value from a finite scenario
sample. The purpose of this section is to quantify the sampling error in that
reported optimal value using the confidence intervals above.

We first describe the numerical setting used in both examples. The nominal
control problem is given by
\[
\min_{u \in \mathcal{U}}\, F(x_u(t_f, \mathbb{E}[\xi])),
\]
which replaces the uncertain parameters by their expected value. When
visualizing controls, we include the nominal control as a deterministic
baseline, so that the effect of accounting for parametric uncertainty can be
seen directly. We use single shooting with a fixed-step explicit fourth-order
Runge--Kutta method. Controls are piecewise constant on a uniform grid of
\([0,t_f]\) with \(q\) subintervals. Finer grids are used for visualizing
nominal and SAA controls. The statistical results require repeated SAA solves,
so they use a coarser fixed grid to keep the computation reproducible and
affordable. Computations use the Python package EnsembleControl \cite{Milz2024},
built on CasADi \cite{Andersson2019} and IPOPT \cite{Waechter2006}. Our code and
its output are archived at \cite{Javeed2026}. All simulations were run on a
laptop with an Apple M4 chip and 16 gigabytes of RAM.

\subsection{Fed-batch reactor under uncertainty}
\label{sec:luus-reactor}

The first example is based on the fed-batch reactor control problem of
\citet{Luus1992}; the underlying fed-batch model is due to
\citet{Park1988}.
The original problem is deterministic and has a free final
time; here we introduce parametric uncertainty in the numerical constants
appearing in the dynamics and initial condition, and use the example to
illustrate uncertainty-aware dynamic optimization. The state is
\(x(t,\xi)\in\mathbb R^5\), the scalar control is \(u(t)\), and the final time
is fixed at \(t_f=15\) hours.
The states \(x_1\), \(x_2\), \(x_3\), \(x_4\), and \(x_5\)
represent the secreted-protein concentration, total-protein concentration,
cell density, glucose concentration, and culture volume, respectively, while
\(u\) is the glucose feed flow rate.
For a parameter \(\xi \in \mathbb{R}^{11}\), the dynamics are
\[
\begin{aligned}
\dot x_1(t,\xi)
&=
g_1(x_4(t,\xi),\xi)
\bigl(x_2(t,\xi)-x_1(t,\xi)\bigr)
-
\frac{u(t)}{x_5(t,\xi)}x_1(t,\xi),
\\
\dot x_2(t,\xi)
&=
g_2(x_4(t,\xi),\xi)x_3(t,\xi)
-
\frac{u(t)}{x_5(t,\xi)}x_2(t,\xi),
\\
\dot x_3(t,\xi)
&=
g_3(x_4(t,\xi),\xi)x_3(t,\xi)
-
\frac{u(t)}{x_5(t,\xi)}x_3(t,\xi),
\\
\dot x_4(t,\xi)
&=
\xi_8\, g_3(x_4(t,\xi),\xi)x_3(t,\xi)
+
\frac{u(t)}{x_5(t,\xi)}
\bigl(20-x_4(t,\xi)\bigr),
\\
\dot x_5(t,\xi)
&=
u(t).
\end{aligned}
\]
The kinetic functions are
\[
\begin{aligned}
g_3(x_4,\xi)
&=
\frac{\xi_5 x_4}{(x_4+\xi_6)(x_4+\xi_7)},
\quad
g_2(x_4,\xi)
=
\frac{x_4\exp(\xi_3 x_4)}{\xi_4+x_4},
\\
g_1(x_4,\xi)
&=
\frac{\xi_1 g_3(x_4,\xi)}{\xi_2+g_3(x_4,\xi)}.
\end{aligned}
\]
The functions $g_1$, $g_2$, and $g_3$ are kinetic functions governing
protein secretion, protein expression, and cell growth, respectively. Because
these functions depend differently on the glucose concentration $x_4$,
different glucose levels favor cell growth, protein expression, and secretion
at different stages of the batch.
The parameterized initial condition equals
$
x_0(\xi)
=
(0,0,\xi_9,\xi_{10},\xi_{11})
$,
and we use the control bounds $0\le u(t)\le 2$, $t \in [0, t_f]$. The
performance index is
\[
F(x(t_f,\xi))
=
-x_1(t_f,\xi)x_5(t_f,\xi),
\]
so minimizing \(F\) maximizes the final amount of secreted protein. The nominal
parameter values are
\[
\bar\xi
=
(4.75,\;0.12,\;-5,\;0.1,\;21.87,\;0.4,\;62.5,\;-7.3,\;1,\;5,\;1),
\]
where the first eight components, \((\xi_1,\dots,\xi_8)\), enter the dynamics
and the last three, \((\xi_9,\xi_{10},\xi_{11})\), enter the initial condition.
We model parametric uncertainty by relative perturbations of the nominal
parameters. In particular,
\[
\xi_j
=
(1+r\eta_j)\bar\xi_j,
\quad j = 1, \ldots, 11,
\quad
r=0.04,
\]
where the \(\eta_j\) are independent and uniformly distributed over $[-1,1]$.

\Cref{fig:fed-batch-controls} depicts the postprocessed nominal control and a
postprocessed SAA solution, with $q$ and the sample size indicated in the
legend. Because the dynamics are affine in the control, the control problem is
potentially singular, and the direct controls chatter. We therefore postprocess the direct
controls from the first-order optimality conditions: integrating the state and
adjoint of each scenario and forming the ensemble switching function, we hold
the control at its bounds on the bang arcs and replace it by the reconstructed
singular control on the singular arc. The nominal control is obtained the same
way from the single nominal parameter.

\begin{figure}[t]
\centering
\subfloat[Postprocessed  nominal solution]{%
  \includegraphics[width=0.47\linewidth]{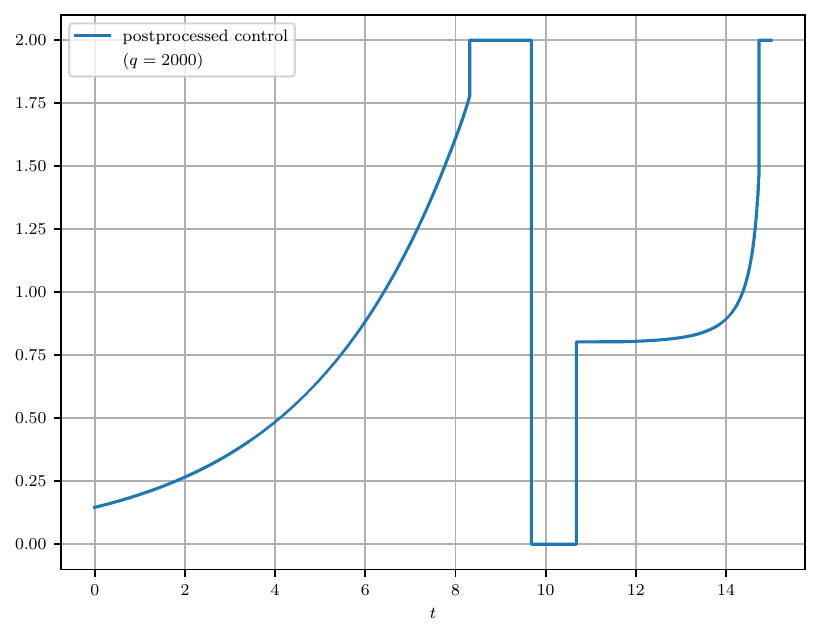}%
}\hfill \subfloat[Postprocessed SAA solution]{%
  \includegraphics[width=0.47\linewidth]{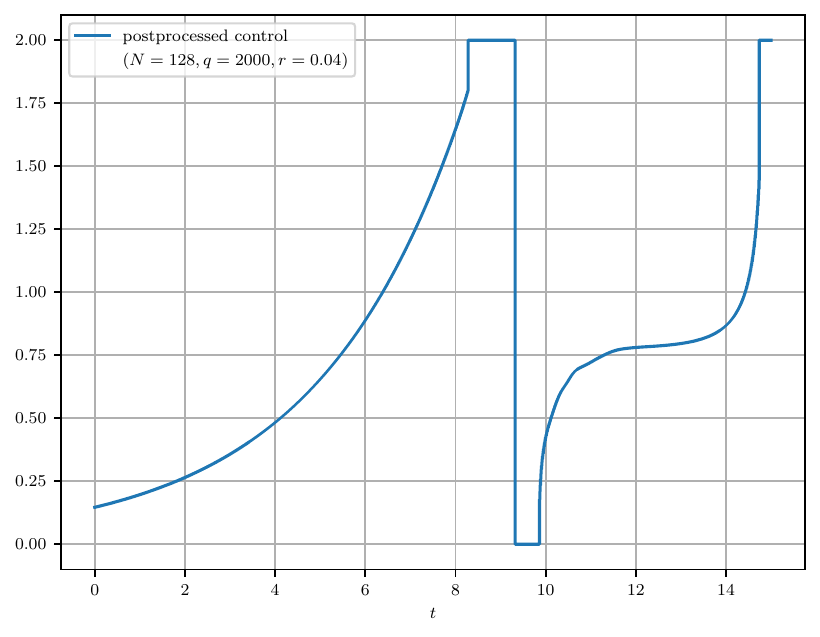}%
} \caption{For the fed-batch reactor example, postprocessed nominal and SAA
optimal solutions.}
\label{fig:fed-batch-controls}
\end{figure}

\Cref{fig:fed-batch-clt} illustrates the finite-sample behavior of the scaled
SAA optimal-value error \(N^{1/2}(\widehat J_N^* - J^*)\). Since \(J^*\) is
unknown, we replace it by an independent reference value \(\widehat
J_{N_\mathrm{ref}}^*\), computed from a larger sample. The comparable spread of
the scaled errors across the $N=32$ and 64 sample sizes is consistent with the
\(N^{-1/2}\) optimal-value error predicted by \Cref{thm:optimal_value_clt}. The
figure also reports empirical estimates of \(\mathbb E[\widehat J_N^*]\). As
expected for a minimization problem, these means approach the population optimal
value from below \cite{Mak1999,Shapiro2021}, reflecting the well-known
optimistic bias of SAA optimal values.

\begin{figure}[t]
\centering
\subfloat[Sample size $N = 32$]{%
  \includegraphics[width=0.33\linewidth]{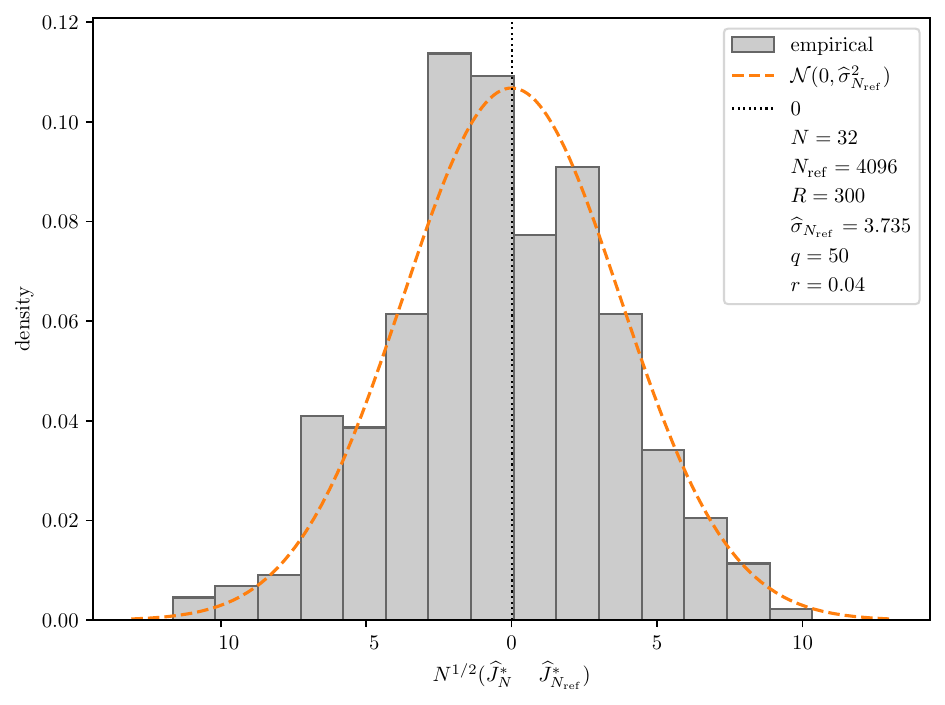}%
}\hfill \subfloat[Sample size $N = 64$]{%
  \includegraphics[width=0.33\linewidth]{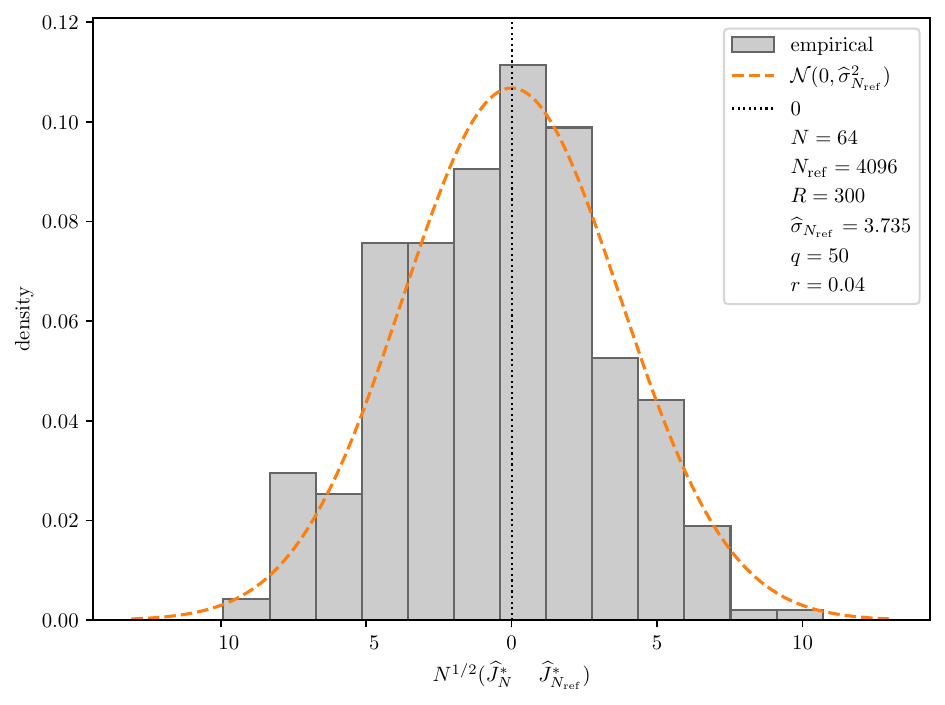}%
}\hfill \subfloat[Optimization bias]{%
  \includegraphics[width=0.33\linewidth]{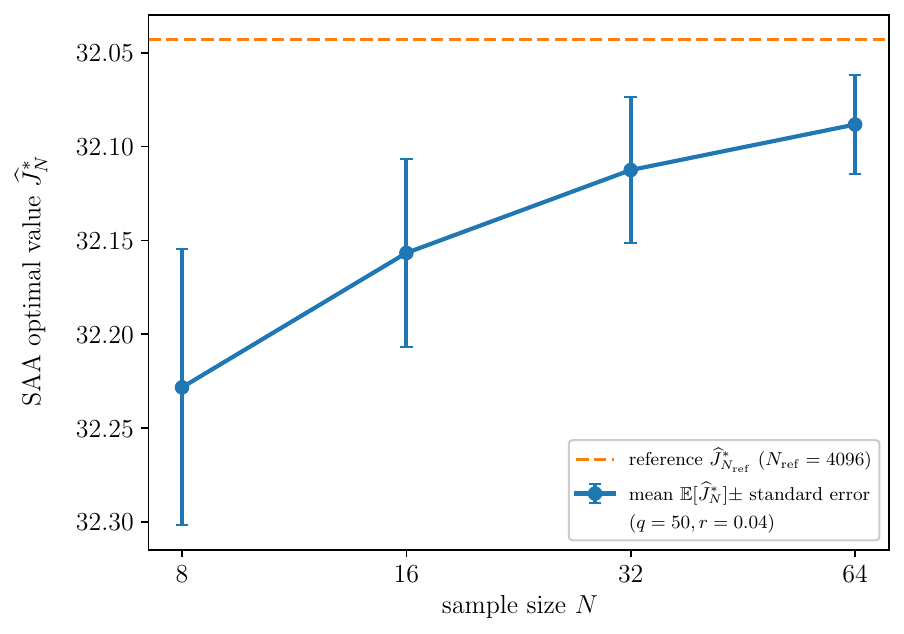}%
} \caption{%
For the fed-batch reactor example, finite-sample behavior of the SAA optimal
value. Panels~(a) and (b) show histograms of \(N^{1/2}(\widehat J_N^*-\widehat
J_{N_\mathrm{ref}}^*)\) over \(R\) i.i.d.\ replications for the indicated sample
size \(N\). Panel~(c) reports the empirical mean of \(\widehat J_N^*\) across
replications. Here \(\widehat J_N^*\) is the SAA optimal value computed from
\(N\) i.i.d.\ parameter scenarios, and \(\widehat J_{N_\mathrm{ref}}^*\) is an
independent reference SAA optimal value computed with \(N_{\mathrm{ref}}\)
scenarios. }
\label{fig:fed-batch-clt}
\end{figure}

\Cref{fig:fed-batch-confidence-intervals} compares the two proposed confidence
intervals. For the subsampling confidence intervals, we use $b_N = \lfloor
N^{6/7} \rfloor$ and $m_N = 5 N$. For each sample size \(N\), we solve the SAA
problem and report the resulting optimal value \(\widehat J_N^*\) together with
a \(1-\beta\) confidence interval for the population optimal value \(J^*\). The
plug-in interval uses the normal approximation, whereas the subsampling interval
uses the empirical distribution of smaller subsample optimal values. The
confidence intervals shown in the figure correspond to one realization of the
SAA problem for each \(N\). Since these intervals depend on the sampled
scenarios, they are random. For the displayed realization, the interval widths
decrease as \(N\) increases, illustrating the reduction in uncertainty in
\(\widehat J_N^*\) as the scenario budget grows.

The plug-in interval is computationally inexpensive, but its validity requires
uniqueness of the population optimizer. Since uniqueness is difficult to verify
for nonlinear dynamic optimization problems, we interpret the plug-in intervals
as diagnostics based on the computed SAA optimizers. The subsampling intervals
are less structurally restrictive because they remain applicable without
uniqueness of the population optimizer. In finite samples, either interval can
be wider; in \Cref{fig:fed-batch-confidence-intervals}, the plug-in intervals
are wider than the subsampling intervals. The figure shows how the estimated
sampling error in \(\widehat J_N^*\) decreases with the scenario budget \(N\).

\begin{figure}[t]
\centering
\subfloat[Plug-in confidence interval]{%
  \includegraphics[width=0.47\linewidth]{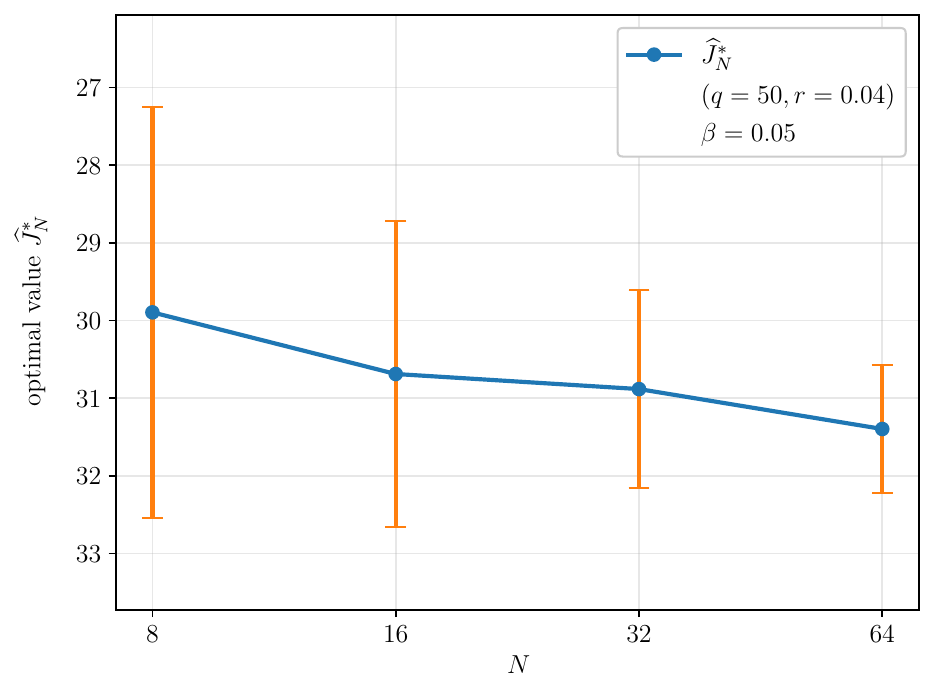}%
}\hfill \subfloat[Subsampling confidence interval]{%
  \includegraphics[width=0.47\linewidth]{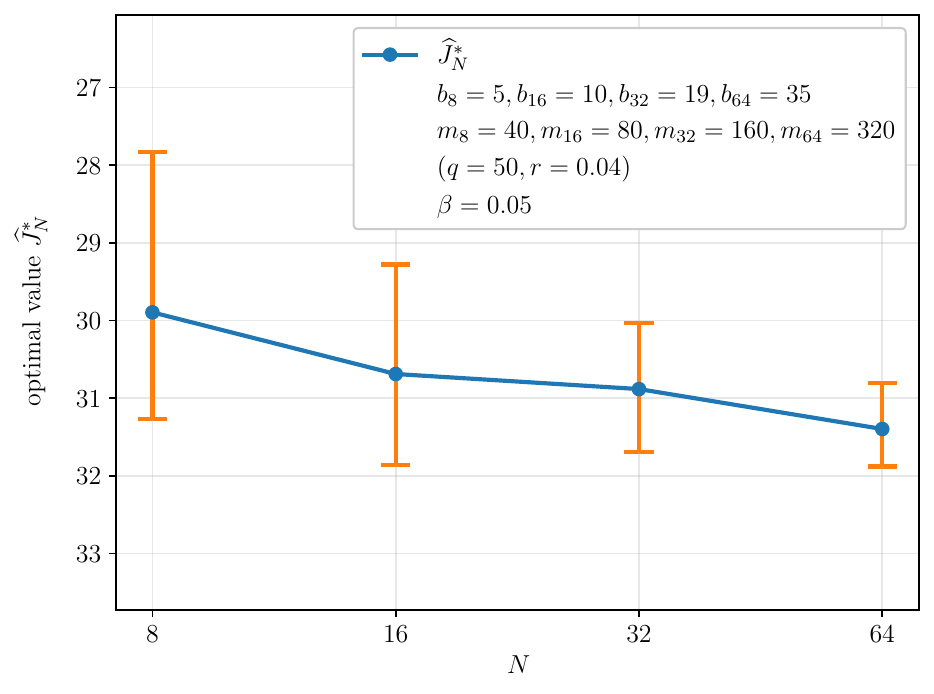}%
} \caption{%
For the fed-batch reactor example, confidence intervals for the population
optimal value $J^*$ versus the sample size $N$. Each panel shows the SAA optimal
value $\widehat J_N^*$ with its $1-\beta$ confidence interval. Panel (a) uses
the plug-in normal interval; panel (b) uses the subsampling interval. }
\label{fig:fed-batch-confidence-intervals}
\end{figure}

We examine how the subsample size affects the width of the subsampling
confidence interval. For several choices of the subsample size $b_N$, we repeat 
the subsampling calculation 30 times and report the average interval width.
\Cref{fig:fed-batch-subsampling-sensitivity} shows the results for \(N=32\) and \(N=64\).
For both sample sizes, the average interval width decreases as the
subsample size increases over the displayed range.

\begin{figure}[t]
\centering
\subfloat[Sample size $N=32$]{%
  \includegraphics[width=0.47\linewidth]%
  {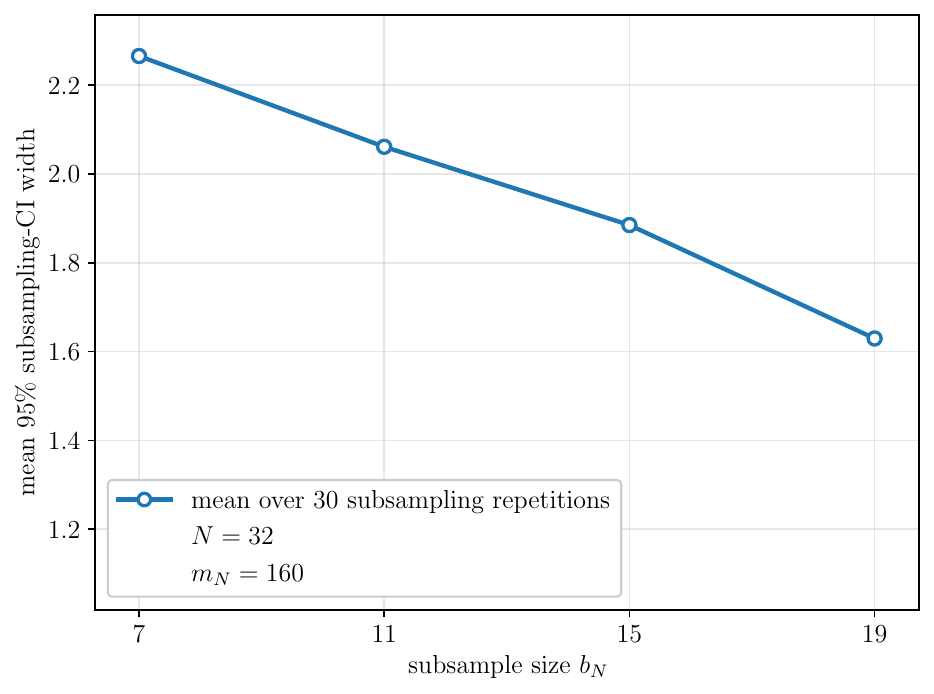}%
}\hfill
\subfloat[Sample size $N=64$]{%
  \includegraphics[width=0.47\linewidth]%
  {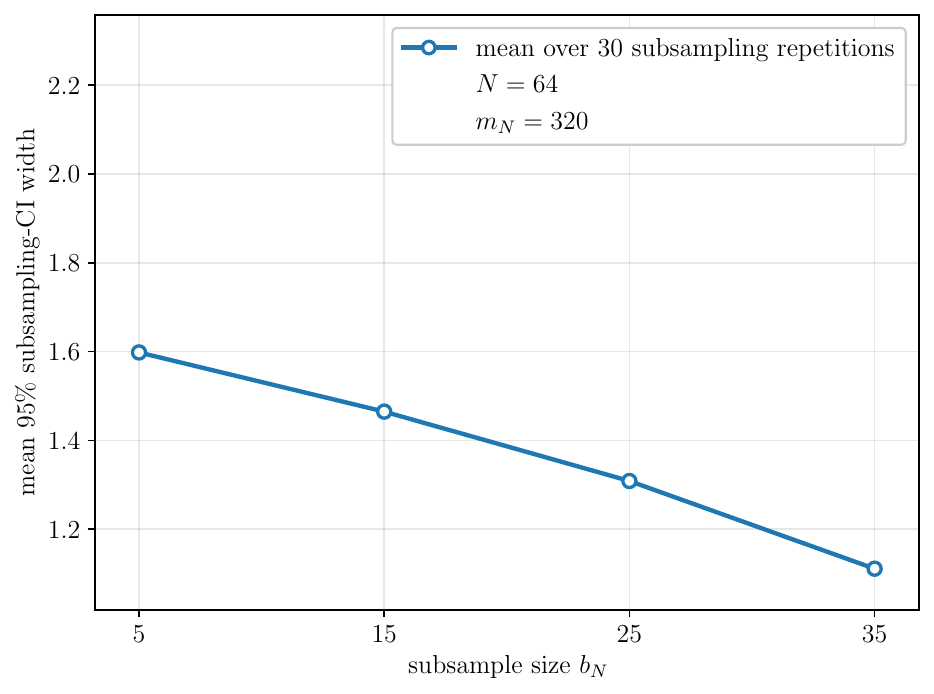}%
}
\caption{%
Mean width of the nominal \(95\%\) subsampling confidence interval as a
function of \(b_N\) for the fed-batch reactor example. Each value is averaged
over \(30\) subsampling repetitions with \(m_N=5N\). The panels share the same
vertical scale.}
\label{fig:fed-batch-subsampling-sensitivity}
\end{figure}

Our theoretical confidence interval guarantees are asymptotic. We next assess
the finite-sample coverage of the plug-in interval in this example. We do not
include the analogous study for the subsampling interval, because it would
require substantially more repeated SAA solves. 
To report coverage in a reliable
way, we use the following simulation-based lower-bound estimator from
\citet{BenTal2009,Nemirovski2007}. Let \(p\in[0,1]\), and let \(\zeta\) be a
Bernoulli random variable with success probability \(p\). For i.i.d.\ copies
\(\zeta_1,\ldots,\zeta_R\), let \(L=\sum_{r=1}^R \zeta_r\) be the number of
successes. As in Section~10.2.1 of \citet{BenTal2009} and p.~979 of
\citet{Nemirovski2007}, we define
\[
\widehat p_{R,\delta}(L)
=
\min\bigg\{
q\in[0,1]:
\sum_{\ell=L}^R {R\choose \ell} q^\ell(1-q)^{R-\ell}
\ge \delta
\bigg\}.
\]
Lemma~10.2.1 of \citet{BenTal2009} implies that, with probability at least
\(1-\delta\), \(\widehat p_{R,\delta}(L)\) is a lower bound on \(p\). In
contrast, the standard Monte Carlo estimator \(L/R\) is  unbiased, but is not a
certified lower bound. We use \(\delta=10^{-6}\).

We apply this estimator to the plug-in interval as follows. Since the exact
population optimal value \(J^*\) is not available analytically, we compute an
independent reference value \(\widehat J_{N_\mathrm{ref}}^*\) and assess coverage
relative to this fixed value. For each independent replication \(r\), we
construct the plug-in confidence interval and set
\[
\zeta_r
=
\mathbf{1}\{\widehat J_{N_\mathrm{ref}}^*
\text{ belongs to the plug-in interval in replication } r\},
\]
where $\mathbf{1}$ is the indicator function. Thus, \(p\) is the probability
that the plug-in interval covers \(\widehat J_{N_\mathrm{ref}}^*\) in one independent
replication. The estimator \(\widehat p_{R,\delta}(L)\), with \(L=\sum_{r=1}^R
\zeta_r\), gives a certified lower bound on this coverage probability.

The coverage results are shown in \Cref{table:fed-batch-coverage}. The Monte
Carlo coverage \(L/R\) is below the nominal level for all tested sample sizes
and confidence levels, although the coverage improves as \(N\) increases. The
reliability-adjusted lower bounds \(\widehat p_{R,\delta}(L)\) are also below
the corresponding nominal confidence levels. Thus, for this example and the
tested sample sizes, the plug-in confidence intervals fall short of their
nominal coverage, with the discrepancy decreasing as the training sample size
grows. 
The observed undercoverage may reflect optimistic bias of the
SAA optimal value, error in the normal approximation, and sampling
error in the reference optimal value. The Student-$t$ modification
in \Cref{app:student-t-coverage} improves coverage, particularly for
small $N$, without isolating these effects. The observed undercoverage does not
contradict the asymptotic coverage guarantee 
provided by \Cref{thm:gaussian_ci_unique_optimizer}.

\begin{table}[t]
  \centering
  \caption{Estimated coverage of the plug-in confidence interval  for the population
  optimal value of the fed-batch reactor. The population optimum is proxied by
  the SAA optimal value on an independent reference sample of size
  $N_{\mathrm{ref}} = 4096$. For each training size $N$ and nominal level
  $1-\beta$, $L/R$ is the empirical coverage over $R=10000$ replications and
  $\widehat p_{R,\delta}(L)$ is the $(1-\delta)$ lower confidence bound on 
the coverage probability relative to the fixed reference value.}
  \begin{tabular}{rcccccc}
    \toprule
     & \multicolumn{2}{c}{$1-\beta = 0.90$} & \multicolumn{2}{c}{$1-\beta = 0.95$} & \multicolumn{2}{c}{$1-\beta = 0.99$} \\
    \cmidrule(lr){2-3}\cmidrule(lr){4-5}\cmidrule(lr){6-7}
    $N$ & $L/R$ & $\widehat p_{R,10^{-6}}(L)$ & $L/R$ & $\widehat p_{R,10^{-6}}(L)$ & $L/R$ & $\widehat p_{R,10^{-6}}(L)$ \\
    \midrule
    8 & 0.839 & 0.821 & 0.895 & 0.880 & 0.953 & 0.943 \\
    16 & 0.872 & 0.856 & 0.922 & 0.909 & 0.971 & 0.962 \\
    32 & 0.883 & 0.867 & 0.934 & 0.922 & 0.984 & 0.977 \\
    64 & 0.888 & 0.873 & 0.940 & 0.928 & 0.985 & 0.978 \\
    128 & 0.884 & 0.868 & 0.941 & 0.929 & 0.986 & 0.979 \\
    \bottomrule
  \end{tabular}
\label{table:fed-batch-coverage}
\end{table}

\subsection{Fed-batch ethanol fermentation under uncertainty}
\label{sec:ethanol-fermentation}

Our second example is based on the fed-batch ethanol fermentation problem in
Case Study~II of \citet{Banga2005}. The original problem is deterministic and
has a free final time. In our implementation, we fix the final time at the value
\(t_f=61.17\)~hours, the value reported for the best solution in
\citet{Banga2005}. Similar to our first example, we introduce parametric
uncertainty in the dynamics. The state is \(x(t,\xi)\in\mathbb R^4\), where
\(x_1\), \(x_2\), and \(x_3\) denote the cell mass, substrate, and ethanol
concentrations, respectively, and \(x_4\) denotes the reactor volume. The scalar
control \(u(t)\) is the feed rate. For a parameter \(\xi\in\mathbb R^5\), the
dynamics are
\[
\begin{aligned}
\dot x_1(t,\xi)
&=
g_1(x(t,\xi),\xi)x_1(t,\xi)
-
\frac{u(t)}{x_4(t)}x_1(t,\xi),
\\
\dot x_2(t,\xi)
&=
-10 g_1(x(t,\xi),\xi)x_1(t,\xi)
+
\frac{u(t)}{x_4(t)}
\bigl(150-x_2(t,\xi)\bigr),
\\
\dot x_3(t,\xi)
&=
g_2(x(t,\xi),\xi)x_1(t,\xi)
-
\frac{u(t)}{x_4(t)}x_3(t,\xi),
\\
\dot x_4(t)
&=
u(t).
\end{aligned}
\]
The kinetic functions are
\[
\begin{aligned}
g_1(x,\xi)
&=
\frac{\xi_1}{1+x_3/\xi_2}
\frac{x_2}{\xi_3+x_2},
\quad \text{and} \quad
g_2(x,\xi)
=
\frac{1}{1+x_3/\xi_4}
\frac{x_2}{\xi_5+x_2}.
\end{aligned}
\]
Here $g_1$ governs cell growth and $g_2$ governs ethanol production.
Both increase with the substrate concentration $x_2$ and decrease with the
ethanol concentration $x_3$, reflecting substrate limitation and product
inhibition.
The initial condition is
$
x_0(\xi)=(1,150,0,10)
$
and we use the control constraints $0\le u(t)\le 12$, $t \in [0, t_f]$, and
\(x_4(t_f)\le 200\). 
The feed supplies substrate at concentration $150$ while increasing the
reactor volume. As substrate is consumed, feeding tends to replenish it through
the term $(u/x_4)(150-x_2)$, while simultaneously diluting the cell-mass and
ethanol concentrations through the terms proportional to $u/x_4$.
The performance index is the negative of the total amount
of ethanol, that is,
\[
F(x(t_f,\xi))
=
-x_3(t_f,\xi)x_4(t_f,\xi),
\]
and the nominal parameter vector is
\[
\bar \xi
=
(0.408,\;16,\;0.22,\;71.5,\;0.44).
\]
We use the parametric uncertainty model from \Cref{sec:luus-reactor} with
$r=0.06$.

\Cref{fig:ethanol-controls} compares the postprocessed nominal and SAA controls.
The postprocessing and the remaining numerical results mirror the fed-batch
reactor study: \Cref{fig:ethanol-clt} illustrates the finite-sample behavior of
\(N^{1/2}(\widehat J_N^*-J^*)\), while \Cref{fig:ethanol-confidence-intervals}
compares the plug-in and subsampling confidence intervals.

\begin{figure}[t]
\centering
\subfloat[Postprocessed nominal control]{%
  \includegraphics[width=0.47\linewidth]{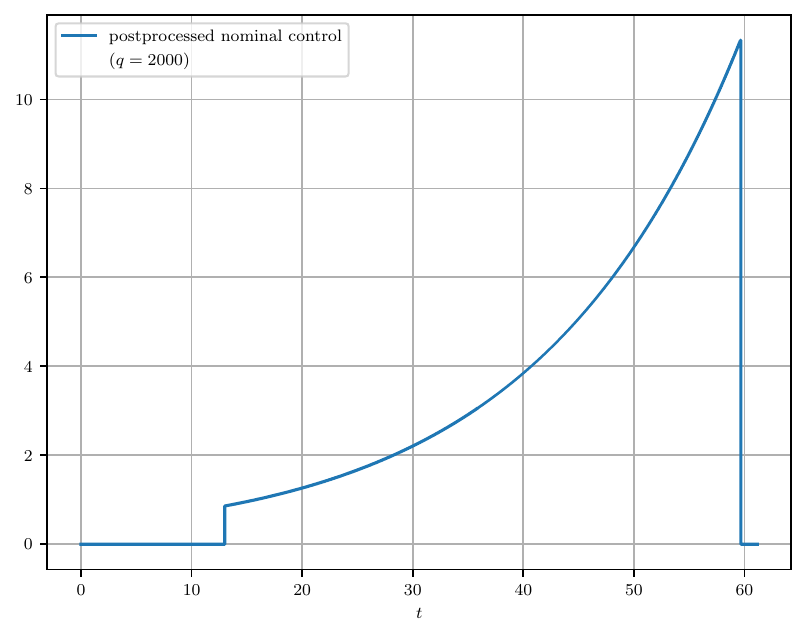}%
}\hfill \subfloat[Postprocessed SAA control]{%
  \includegraphics[width=0.47\linewidth]{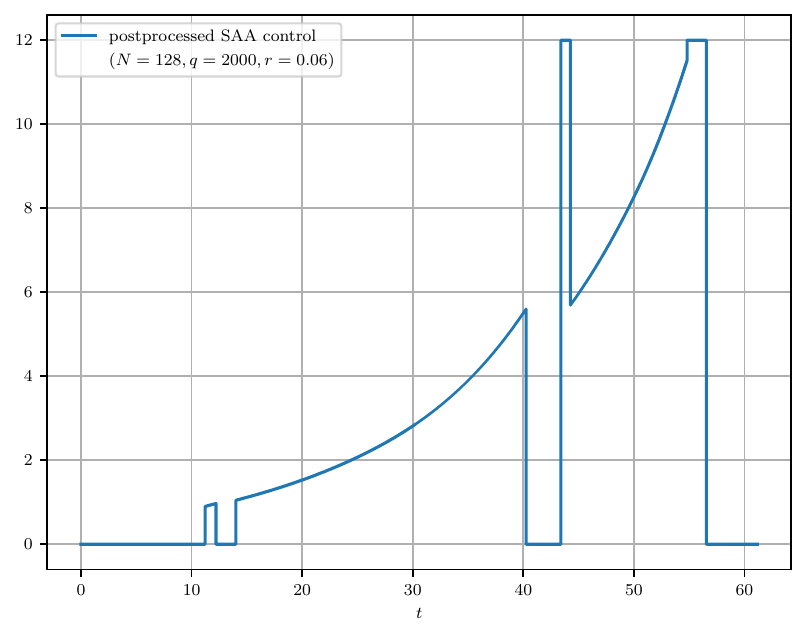}%
} \caption{%
For the ethanol fermentation example, postprocessed nominal and SAA optimal
solutions. }
\label{fig:ethanol-controls}
\end{figure}

\begin{figure}[t]
\centering
\subfloat[Sample size $N = 32$]{%
  \includegraphics[width=0.33\linewidth]{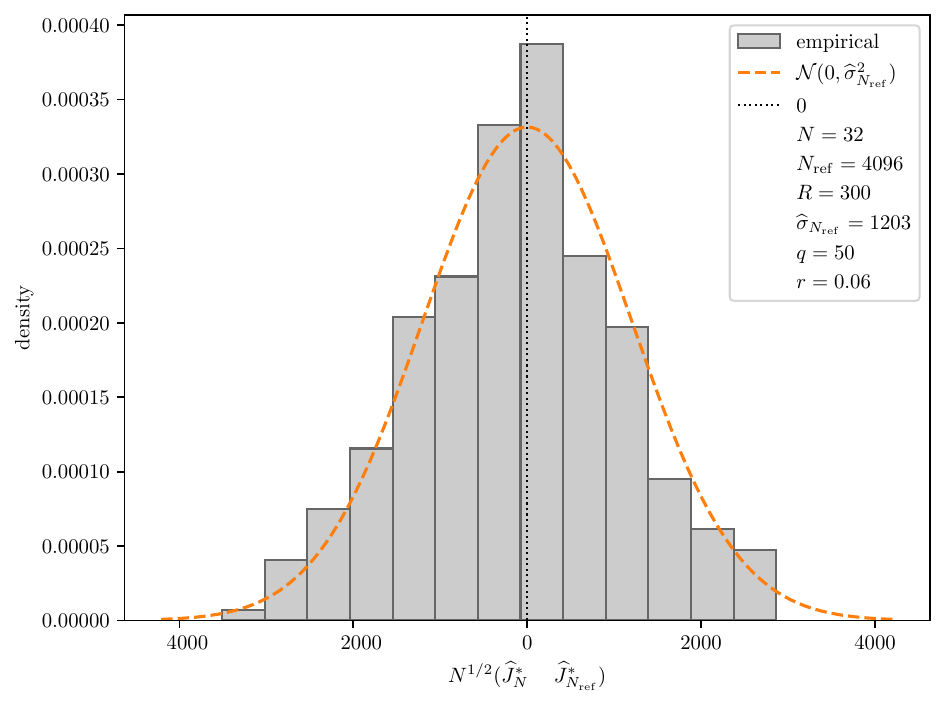}%
}\hfill \subfloat[Sample size $N = 64$]{%
  \includegraphics[width=0.33\linewidth]{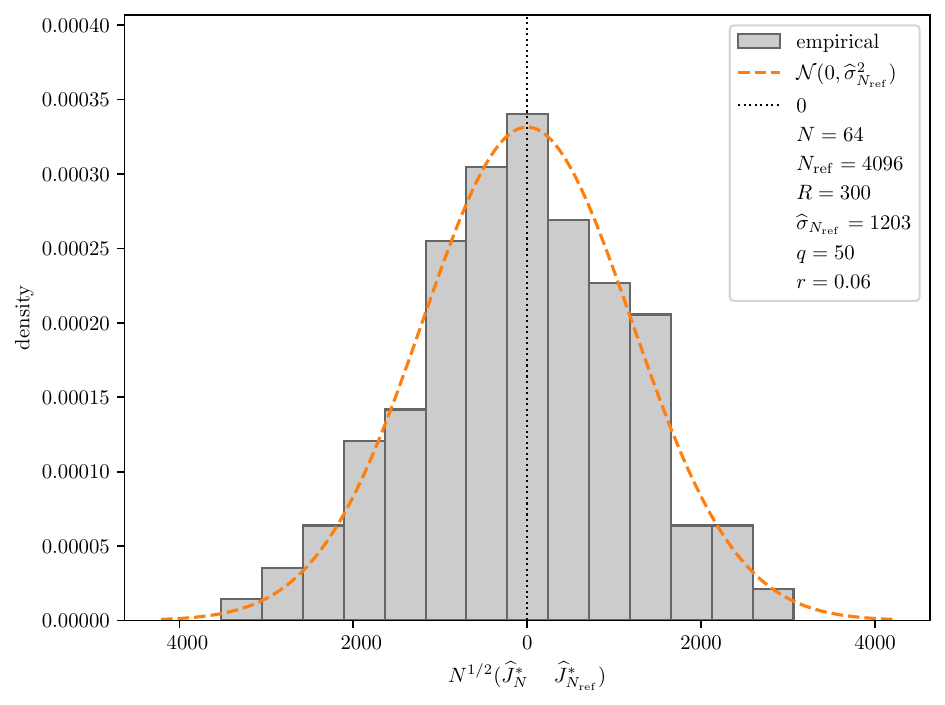}%
}\hfill \subfloat[Optimization bias]{%
  \includegraphics[width=0.33\linewidth]{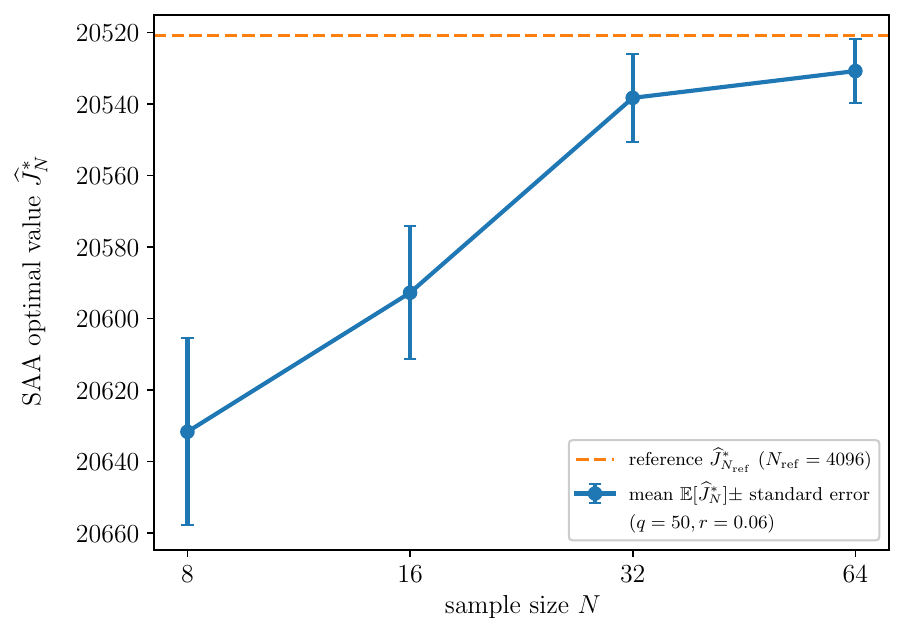}%
} \caption{%
For the ethanol fermentation example, finite-sample behavior of the SAA optimal
value. Panels~(a) and (b) show histograms of \(N^{1/2}(\widehat J_N^*-\widehat
J_{N_\mathrm{ref}}^*)\) over \(R\) i.i.d.\ replications for the indicated sample
size \(N\). Panel~(c) reports the empirical mean of \(\widehat J_N^*\) across
replications. Here \(\widehat J_N^*\) is the SAA optimal value computed from
\(N\) i.i.d.\ parameter scenarios, and \(\widehat J_{N_\mathrm{ref}}^*\) is an
independent reference SAA optimal value computed with \(N_{\mathrm{ref}}\)
sampled parameter realizations. }
\label{fig:ethanol-clt}
\end{figure}

\begin{figure}[t]
\centering
\subfloat[Plug-in confidence interval]{%
  \includegraphics[width=0.47\linewidth]{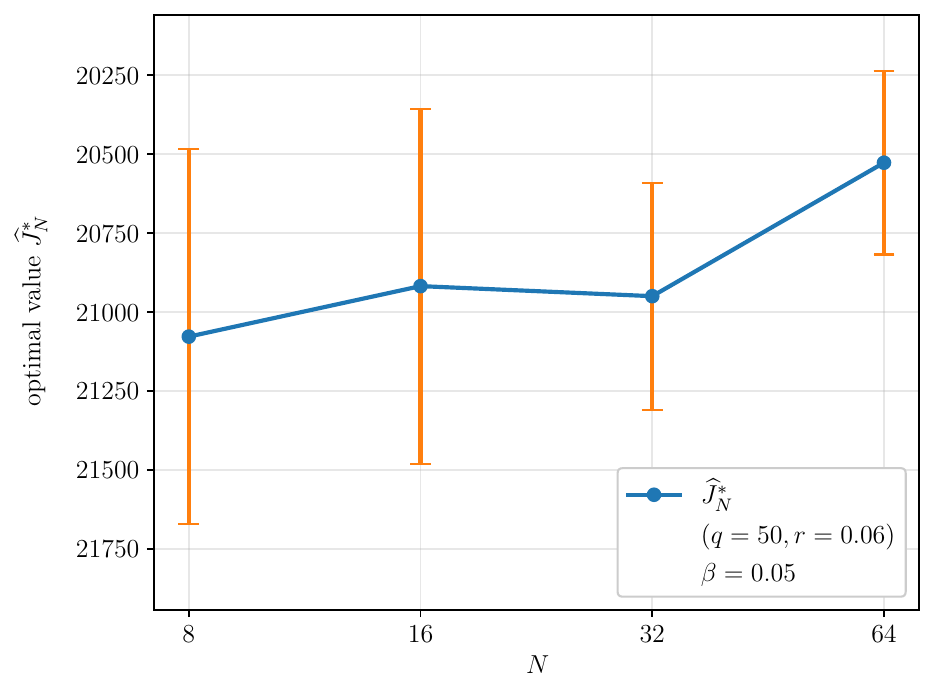}%
}\hfill \subfloat[Subsampling confidence interval]{%
  \includegraphics[width=0.47\linewidth]{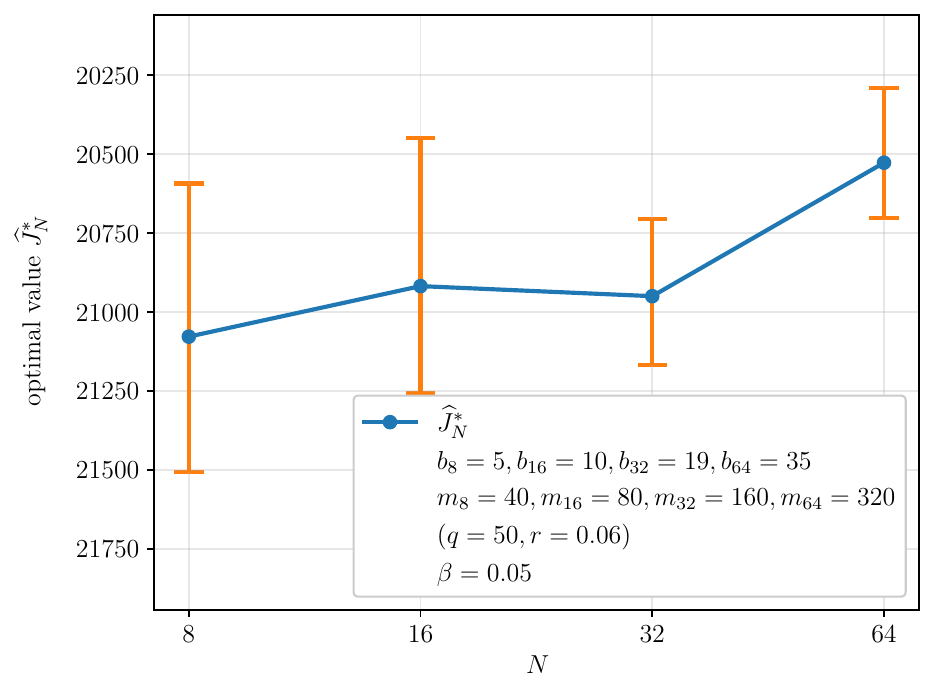}%
} \caption{%
For the ethanol fermentation example, confidence intervals for the population
optimal value \(J^*\). Each panel shows the SAA optimal value \(\widehat J_N^*\)
and its \(1-\beta\) confidence interval for the reported sample sizes. Panel (a)
uses the plug-in normal interval; panel (b) uses the subsampling interval. }
\label{fig:ethanol-confidence-intervals}
\end{figure}

\section{Discussion}

The justification for the proposed confidence intervals is based on the
stability estimate developed in \Cref{app:stability-estimate}. The estimate
makes it possible to view the SAA objective as a stochastic process indexed by
admissible controls, which leads to a functional central limit theorem for the
SAA objective. The resulting confidence intervals quantify sampling error in SAA
optimal values for a fixed discretization and solution procedure. They indicate
how precisely the population optimal value \(J^*\) is estimated from the sampled
scenarios. A wide interval suggests that the scenario sample is too small to
estimate \(J^*\) accurately, whereas a narrow interval indicates that the
reported optimal value is relatively stable with respect to scenario sampling.
The plug-in interval is computationally inexpensive and is appropriate when the
population optimizer is expected to be unique and the SAA optimizer is stable
across samples. The subsampling interval requires additional SAA solves on
subsamples, but it does not require the population optimizer to be unique. It is
therefore the safer choice when the objective may have several optimal or nearly
optimal controls. The \(N^{-1/2}\) scaling also gives practical guidance:
halving the sampling error requires roughly four times as many scenarios.

Several extensions remain. The present results are restricted to risk-neutral
terminal objectives and open-loop policies. 
For the subsampling confidence interval, the asymptotic theory requires
\(b_N\to\infty\) and \(b_N/N\to0\), but does not determine how \(b_N\)
should be chosen for a finite sample. Developing theory-informed,
practically useful guidelines for selecting the subsample size \(b_N\)
is left for future work.
Future work includes risk-averse
objectives, such as conditional value-at-risk, and state-path constraints
\cite{GeiersbachJuly2025}. Extensions to feedback policies are also important
for batch processes when online measurements can be used during operation
\cite{Bradford2019}. Finally, adaptive sampling rules could be used to decide
sequentially whether additional scenarios are needed \cite{Royset2013}.

\subsection*{Reproducibility of numerical results}

The code and data needed to reproduce the numerical results are archived at
\url{https://doi.org/10.5281/zenodo.21365488}.

\subsection*{Acknowledgment}

During the preparation of this manuscript, the authors used ChatGPT~5.5, 5.6, and 6 to
assist with language editing, organization, and presentation writing. The
authors reviewed and edited all AI-assisted material and are responsible for the
final content.

\subsection*{Funding}
This material is based upon work supported by the U.S.\ Department of Energy,
National Nuclear Security Administration, Advanced Simulation and Computing
Program; the Sandia Laboratory Directed Research and Development Program; and
the National Science Foundation under Award No.\ DMS-2410944.\footnote{%
This article has been authored by an employee of National Technology \&
Engineering Solutions of Sandia, LLC under Contract No. DE-NA0003525 with the
U.S. Department of Energy (DOE). The employee owns all right, title and interest
in and to the article and is solely responsible for its contents. The United
States Government retains and the publisher, by accepting the article for
publication, acknowledges that the United States Government retains a
non-exclusive, paid-up, irrevocable, world-wide license to publish or reproduce
the published form of this article or allow others to do so, for United States
Government purposes. The DOE will provide public access to these results of
federally sponsored research in accordance with the DOE Public Access Plan:
$$\texttt{https://www.energy.gov/downloads/doe-public-access-plan}$$ This paper
describes objective technical results and analysis. Any subjective views or
opinions that might be expressed in the paper do not necessarily represent the
views of the U.S. Department of Energy or the United States Government. }

\appendix

\renewcommand{\thesection}{\Alph{section}}

\section{Stability estimate}
\label[appendix]{app:stability-estimate}

The following estimate is the main analytic ingredient of the paper. It shows
that, for control-affine dynamics, terminal states are stable with respect to
the cumulative effect of the control. This is useful because weakly compact sets
of  controls need not be norm compact, whereas their cumulative input profiles
have stronger compactness properties.

Let
$
U:L^2(0,t_f;\mathbb{R}^s)\to C(0,t_f;\mathbb{R}^s)
$
be the Volterra operator, that is, let
\[
[U(u)](t)=\int_0^t u(s)\,\mathrm{d}s,
\qquad t\in[0,t_f],
\]
where $C(0,t_f;\mathbb{R}^s)$ is the space of $\mathbb{R}^s$-valued continuous
mappings on $[0,t_f]$. Thus \(U(u)\) is the cumulative input profile associated
with the control \(u\). For feasible controls \(u,v\in\mathcal U\), define
\begin{align}
\label{eq:metric-d}
d(u,v)
=
\max_{t\in[0,t_f]} \big\|[U(u)](t)-[U(v)](t)\big\|_2.
\end{align}
Here $\|\cdot\|_2$ denotes either the Euclidean norm or
its induced operator norms.
While not explicitly used by us, 
on the weakly compact set \(\mathcal U\), the metric \(d\) metrizes the
weak topology, which follows from \Cref{lem:primitive_control_compactness}
in \Cref{app:proof-optimal-value-clt}. Thus, convergence
in \(d\) is equivalent to weak convergence of feasible controls.

Our stability estimate shows that this distance controls the difference between
the corresponding terminal states, uniformly over the uncertainty set.
Under our stronger smoothness and compact reachability assumptions,
this estimate gives a quantitative counterpart to the continuous dependence
results of \citet{Liu1999}, which relate trajectory convergence for
control-affine systems to uniform convergence of the cumulative inputs \(U(u)\), 
together with uniform $L^1(0,t_f;\mathbb{R}^s)$-boundedness of the controls.

\begin{proposition}[Endpoint primitive stability]
\label{lem:endpoint_primitive_stability}
Under \Cref{ass:weakly-compact-feasible-set,ass:reachable,ass:c1}, there exists
a constant \(C_{\mathcal U,t_f}<\infty\) such that, for all controls
\(u,v\in\mathcal U\),
\begin{equation}
\label{eq:endpoint_estimate}
\sup_{\xi\in\Xi}\|x_u(t_f,\xi)-x_v(t_f,\xi)\|_2
\le
C_{\mathcal U,t_f}d(u,v).
\end{equation}
\end{proposition}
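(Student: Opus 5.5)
The plan is to fix \(\xi\in\Xi\), set \(e(t)=x_u(t,\xi)-x_v(t,\xi)\) and \(w=u-v\), and bound \(e\) through an integration by parts in time. This moves the derivative off the control difference \(w\) and onto the coefficient \(f_1\), so that only the primitive \(W=U(w)=U(u)-U(v)\) appears. Then \(\max_t\|W(t)\|_2=d(u,v)\).

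\textbf{Step 1: the error equation.} Subtracting the two state equations gives
\[
\dot e = [f_0(x_u)-f_0(x_v)] + [f_1(x_u)-f_1(x_v)]u + f_1(x_v)\,w .
\]
By \Cref{ass:reachable}, both trajectories stay in the convex compact set \(\mathcal X\), so segments between them stay in \(\mathcal X\). \Cref{ass:c1} then gives constants \(L_0,L_1\), uniform in \(\xi\), from the mean value theorem, with \(\|f_0(x)-f_0(y)\|\le L_0\|x-y\|\) and \(\|f_1(x)-f_1(y)\|\le L_1\|x-y\|\) on \(\mathcal X\). Uniformity in \(\xi\) comes from continuity of \(D_xf_i\) on the compact set \(\mathcal X\times\Xi\). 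The same argument gives \(M_1=\sup\|f_1\|\) and \(M_0=\sup\|f_0\|\).

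\textbf{Step 2: integration by parts on the last term.} Write
\[
\int_0^t f_1(x_v(s))w(s)\,ds = f_1(x_v(t))W(t) - \int_0^t \frac{d}{ds}\big[f_1(x_v(s))\big]W(s)\,ds .
\]
This is valid because \(x_v\) is absolutely continuous and \(f_1(\cdot,\xi)\) is \(C^1\) near \(\mathcal X\). The chain rule gives
\[
\frac{d}{ds}f_1(x_v)W = D_xf_1(x_v)\big[f_0(x_v)+f_1(x_v)v(s)\big]W(s).
\]
Its norm is at most \(K(1+\|v(s)\|_2)\,d(u,v)\), with \(K\) built from \(\sup\|D_xf_1\|\), \(M_0\) and \(M_1\), all uniform in \(\xi\). \Cref{ass:weakly-compact-feasible-set} gives \(\|v\|_{L^2}\le B\), hence \(\|v\|_{L^1}\le t_f^{1/2}B\). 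This part is bounded by \(\big(M_1+K(t_f+t_f^{1/2}B)\big)d(u,v)\) for every \(t\).

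\textbf{Step 3: Grönwall.} Since \(e(0)=0\), integrating the error equation yields
\[
\|e(t)\|\le C_1 d(u,v) + \int_0^t \big(L_0+L_1\|u(s)\|\big)\|e(s)\|\,ds .
\]
Here \(C_1\) is the constant from Step 2. Grönwall's inequality with an \(L^1\) kernel gives
\[
\|e(t_f)\|\le C_1\exp\!\big(L_0t_f+L_1t_f^{1/2}B\big)\,d(u,v).
\]
The right-hand side is independent of \(\xi\), so taking the supremum over \(\xi\) proves \eqref{eq:endpoint_estimate}.

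The main obstacle is justifying Step 2 rigorously and keeping every constant uniform in \(\xi\). This needs the \(C^1\) regularity of \(f_1\) only in \(x\), together with the chain rule for an absolutely continuous argument. For uniformity, \Cref{ass:c1} gives continuity of \(D_xf_1\) jointly on \(\mathcal X\times\Xi\) with \(\Xi\) compact. The integration by parts also has to be done with the matrix-valued coefficient \(f_1\) multiplying the vector \(W\).
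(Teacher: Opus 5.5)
Your proof is correct and follows essentially the paper's route. The paper's auxiliary function $y=\Delta-f_1(x_v)V$ is exactly your integration-by-parts remainder: subtracting $f_1(x_v)V$ cancels the $\dot V$ terms by the control-affine structure. The only difference is bookkeeping. The paper applies the nonhomogeneous Gronwall inequality to $y$, using $\|\Delta\|\le\|y\|+M_1\|V\|$. You instead apply the integral Gronwall inequality directly to $e$ with constant forcing $C_1\,d(u,v)$, which gives a slightly different but equally valid constant.
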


\begin{proof}
For $j \in \{0, 1\}$, define
\begin{align*}
M_j = \sup_{(x, \xi)\in \mathcal{X} \times \Xi}\, \|f_j(x,\xi)\|_2,
\quad \text{and} \quad 
L_j = \sup_{(x, \xi)\in \mathcal{X} \times \Xi}\, \|D_xf_j(x,\xi)\|_2.
\end{align*}
\Cref{ass:reachable,ass:c1} imply
that these suprema are finite. Since $\mathcal{X}$ is convex, the derivative bounds imply that
for all $\xi \in \Xi$ and $j \in \{0, 1\}$, $f_j$ is Lipschitz continuous with Lipschitz constant
$L_j$.
\Cref{ass:weakly-compact-feasible-set} gives
\begin{align*}
R = \sup_{u \in \mathcal{U}}\, \|u\|_{L^1(0,t_f;\mathbb{R}^s)}
\leq 
t_f^{1/2}\sup_{u \in \mathcal{U}}\, \|u\|_{L^2(0,t_f;\mathbb{R}^s)} < \infty.
\end{align*}
Fix \(u,v\in\mathcal U\) and \(\xi\in\Xi\), and suppress \(\xi\)
in the notation. We define
\[
\Delta(t)=x_u(t)-x_v(t),
\qquad
V(t)=[U(u)](t)-[U(v)](t)
\]
and
\[
B(t)=f_1(x_v(t))V(t),
\qquad
y(t)=\Delta(t)-B(t).
\]
The functions $V$, \(B\), and \(y\) are absolutely continuous. For almost every
\(t \in (0,t_f)\),
\[
\dot\Delta(t)
=
f_0(x_u(t))-f_0(x_v(t))
+
\bigl[f_1(x_u(t))-f_1(x_v(t))\bigr]u(t)
+
f_1(x_v(t))\dot V(t),
\]
and
\[
\dot B(t)
=
D_x f_1(x_v(t))[\dot x_v(t)]V(t)
+
f_1(x_v(t))\dot V(t).
\]
The control-affine structure yields cancellation of the terms involving
$\dot V$, giving
\[
\begin{aligned}
\dot y(t)
=
f_0(x_u(t))-f_0(x_v(t))
+
\bigl[f_1(x_u(t))-f_1(x_v(t))\bigr]u(t)
-
D_x f_1(x_v(t))[\dot x_v(t)]V(t).
\end{aligned}
\]
Using the control-affine dynamics, and the uniform bounds on $f_0$ and
$f_1$, we obtain
\[
\|\dot x_v(t)\|_2
= 
\|f_0(x_v(t))
+
f_1(x_v(t))v(t)\|_2
\le
M_0 + M_1\|v(t)\|_2.
\]
Consequently,
 $
\|B(t)\|_2\le M_1\|V(t)\|_2
$
and
\[
\|D_x f_1(x_v(t))[\dot x_v(t)]V(t)\|_2
\le
L_1(M_0+M_1\|v(t)\|_2)\|V(t)\|_2.
\]
Combining the estimates with the Lipschitz bounds for $f_0$ and $f_1$ yields
\[
\|\dot y(t)\|_2
\le
(L_0+L_1\|u(t)\|_2)\|\Delta(t)\|_2
+
L_1(M_0+M_1\|v(t)\|_2)\|V(t)\|_2.
\]
Define 
\[
\begin{aligned}
a(t)
=L_0+L_1\|u(t)\|_2,
\quad \text{and} \quad 
b(t)
=M_1a(t)+L_1\bigl(M_0+M_1\|v(t)\|_2\bigr).
\end{aligned}
\]
Since \(\Delta=y+B\), we have
\[
\begin{aligned}
\|\dot y(t)\|_2
\le{}
a(t)\|y(t)\|_2+b(t)\|V(t)\|_2.
\end{aligned}
\]
Using $y(0) =0$, and $\max_{t\in[0,t_f]}\|V(t)\|_2 = d(u,v)$, and applying
the nonhomogeneous Gronwall inequality gives
\[
\|y(t)\|_2
\le
\int_0^t
b(s)\|V(s)\|_2
\mathrm{e}^{\left(\int_s^t a(r)\,\mathrm dr\right)}\mathrm ds
\leq 
d(u,v)
\int_0^t
b(s)
\mathrm{e}^{\left(\int_s^t a(r)\,\mathrm dr\right)}\mathrm ds.
\]
Combined with 
\[
\int_0^{t_f}a(t)\,\mathrm dt
\le L_0t_f+L_1R, 
\quad \text{and} \quad 
\int_0^{t_f}b(t)\,\mathrm dt
\le
(L_0M_1+L_1M_0)t_f+2L_1M_1R,
\]
and $\Delta=y+B$, we obtain
\[
\sup_{t\in[0,t_f]}\|\Delta(t)\|_2
\le C_{\mathcal U,t_f}d(u,v),
\]
where 
\begin{equation}
\label{eq:constant-c}
C_{\mathcal U,t_f}
=
M_1+
\bigl[(L_0M_1+L_1M_0)t_f+2L_1M_1R\bigr]
\mathrm{e}^{L_0t_f+L_1R}.
\end{equation}
Taking the supremum over \(\xi\in\Xi\) proves
\eqref{eq:endpoint_estimate}.
\end{proof}

The constant $C_{\mathcal U,t_f}$ in \eqref{eq:constant-c} depends on bounds 
for the dynamics, their state
derivatives, and the admissible controls, as well as the time 
horizon $t_f$. Its exponential factor shows
that it may be large for sensitive dynamics and grow rapidly
with $t_f$.

\section{Proof of the statistical limit theorem}
\label[appendix]{app:proof-optimal-value-clt}

We prepare our proof of \Cref{thm:optimal_value_clt}. We  equip $\mathcal{U}$
with the metric $d$ from \eqref{eq:metric-d} and write
$
K=(\mathcal{U},d)
$.
Denote the Sobolev space of square integrable functions with square integrable
derivatives by
$$H^1(0,t_f;\mathbb{R}^s) = \{u \in L^2(0,t_f;\mathbb{R}^s) \colon \dot u \in
L^2(0,t_f;\mathbb{R}^s)  \}.$$

\begin{lemma}[Compactness of the primitive control space]
\label{lem:primitive_control_compactness}
If \Cref{ass:weakly-compact-feasible-set} holds, then the metric space
$K=(\mathcal{U},d)$ is compact.
\end{lemma}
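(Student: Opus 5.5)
The plan is to show that $U(\mathcal U)$ is a compact subset of $C(0,t_f;\mathbb{R}^s)$, and that $U$ restricted to $\mathcal U$ is injective. Together these make $K=(\mathcal U,d)$ isometric to a compact metric space. Since $d(u,v)=\|U(u)-U(v)\|_{C}$ by definition, $d$ is a pseudometric on $\mathcal U$ automatically. It is a genuine metric because $U$ is injective on $L^2$: if $\int_0^t(u-v)\,\mathrm ds=0$ for all $t$, then $u=v$ almost everywhere by the Lebesgue differentiation theorem.

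The natural route to compactness of $U(\mathcal U)$ passes through $H^1(0,t_f;\mathbb{R}^s)$, which is why the paper introduces that space. First, $U$ maps $L^2$ boundedly into $H^1$, since $\dot{U(u)}=u$ and $\|U(u)\|_{L^2}\le t_f\|u\|_{L^2}$. By \Cref{ass:weakly-compact-feasible-set}, $U(\mathcal U)$ is therefore bounded in $H^1$.

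Next, the embedding $H^1(0,t_f;\mathbb{R}^s)\hookrightarrow C(0,t_f;\mathbb{R}^s)$ is compact, and it suffices to argue this directly via Arzel\`a--Ascoli. The bound $\|U(u)(t)-U(u)(r)\|_2\le |t-r|^{1/2}\|u\|_{L^2}$ from Cauchy--Schwarz gives uniform H\"older equicontinuity, and $U(u)(0)=0$ gives uniform boundedness. Hence $U(\mathcal U)$ is relatively compact in $C$.

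It remains to show $U(\mathcal U)$ is closed in $C$; this is the only delicate step, and it is where weak closedness enters. Take $u_k\in\mathcal U$ with $U(u_k)\to g$ uniformly. Since $\mathcal U$ is bounded in the Hilbert space $L^2$, some subsequence satisfies $u_{k_j}\rightharpoonup u$ weakly. Weak closedness of $\mathcal U$ gives $u\in\mathcal U$. For each fixed $t$ and each coordinate, $[U(u_{k_j})](t)=\langle u_{k_j},\mathbf 1_{[0,t]}e_\ell\rangle\to[U(u)](t)$. Pointwise limits coincide with the uniform limit, so $g=U(u)\in U(\mathcal U)$.

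Thus $U(\mathcal U)$ is a closed, relatively compact subset of $C$, hence compact. Since $U\colon(\mathcal U,d)\to U(\mathcal U)$ is an isometric bijection, $K$ is compact.

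Equivalently, one can argue sequentially. Every sequence in $\mathcal U$ has a weakly convergent subsequence with limit in $\mathcal U$. The compact operator $U$ (a compact embedding composed with a bounded map) turns weak convergence into norm convergence in $C$, that is, into convergence in $d$. I would present this version, as it is shortest.
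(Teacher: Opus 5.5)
Your proof is correct and follows the paper's route: $U$ is bounded from $L^2$ into $H^1$, the embedding into $C$ is compact, $\mathcal U$ is weakly (sequentially) compact, the compact operator $U$ maps weakly convergent sequences to norm-convergent ones, and $U\colon(\mathcal U,d)\to U(\mathcal U)$ is an isometric bijection (injectivity is what makes $d$ a metric). Your first version unpacks the abstract weak-to-norm continuity step into Arzel\`a--Ascoli plus an explicit closedness argument that uses weak closedness of $\mathcal U$; the sequential version you plan to present is essentially the paper's argument.
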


\begin{proof}
Since the Sobolev embedding from $H^1(0,t_f;\mathbb{R}^s)$ to
$C(0,t_f;\mathbb{R}^s)$ is compact, and the Volterra operator $U$ is bounded and
linear from $L^2(0,t_f;\mathbb{R}^s)$ to $H^1(0,t_f;\mathbb{R}^s)$, $U$ is a
compact linear operator. The operator is also injective. Since
\(L^2(0,t_f;\mathbb{R}^s)\) is reflexive and \(\mathcal U\) is bounded and
weakly closed, \(\mathcal U\) is weakly compact. Compact linear operators are
weakly-to-strongly continuous, so \(U(\mathcal U)\) is compact in
\(C(0,t_f;\mathbb{R}^s)\). Finally, \(U\) is an isometry from \((\mathcal U,d)\)
onto \(U(\mathcal U)\). Hence \((\mathcal U,d)\) is compact.
\end{proof}

We denote the space of continuous functions on $K$ by $C(K)$.
The functional central limit theorem below could alternatively be
established using Donsker results from empirical process theory; see,
e.g., \citet{Vaart2023}. We use a \(C(K)\)-valued central limit theorem,
thereby avoiding the introduction of Donsker classes and related
measurability considerations.

\begin{theorem}[Functional central limit theorem]
\label{cor:control_indexed_clt_CK}
If \Cref{ass:weakly-compact-feasible-set,ass:reachable,ass:c1,ass:F-Lipschitz}
hold, then
\[
N^{1/2}(\widehat{J}_N-J)\rightsquigarrow Z
\quad
\text{in} \quad  C(K),
\]
where \(Z\) is a centered Gaussian process on $\mathcal{U}$ with covariance
\[
\operatorname{Cov}(Z(u),Z(v))
=
\operatorname{Cov}
\bigl(
F(x_u(t_f,\xi)),F(x_v(t_f,\xi))
\bigr),
\quad u, v \in \mathcal{U}.
\]
\end{theorem}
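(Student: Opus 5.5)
We will view the centred, scaled SAA objective as a normalized sum of i.i.d.\ random elements of the Banach space \(C(K)\), and apply the central limit theorem for Lipschitz processes indexed by a compact metric space with finite metric entropy (Jain--Marcus). For \(\xi\in\Xi\), define \(h_\xi(u)=F(x_u(t_f,\xi))\) for \(u\in\mathcal U\). By \Cref{lem:endpoint_primitive_stability} and \Cref{ass:F-Lipschitz}, with \(L_F\) the Lipschitz constant of \(F\) on \(\mathcal X\) (trajectories stay in \(\mathcal X\) by \Cref{ass:reachable}), we get
\[
|h_\xi(u)-h_\xi(v)|\le L_F C_{\mathcal U,t_f}\, d(u,v),\qquad u,v\in\mathcal U,\ \xi\in\Xi .
\]
Thus each \(h_\xi\) lies in \(C(K)\), with a deterministic Lipschitz constant \(M=L_FC_{\mathcal U,t_f}\). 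Since \(F\) is continuous on the compact set \(\mathcal X\), the functions \(h_\xi\) are also uniformly bounded. Measurability of \(\xi\mapsto h_\xi\in C(K)\) is covered by the standing measurability convention. Since \(K\) is compact and hence separable, \(C(K)\) is separable, and \(\xi\mapsto h_\xi\) is a strongly measurable, bounded \(C(K)\)-valued random element. Its Bochner mean is \(J\), so \(J\in C(K)\), and \(N^{1/2}(\widehat J_N-J)=N^{-1/2}\sum_{i=1}^N(h_{\xi_i}-J)\).

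The Jain--Marcus theorem states the following. Suppose \(X(u)\) is centred with \(|X(u)-X(v)|\le \Lambda\,\rho(u,v)\), where \(\mathbb E\Lambda^2<\infty\), and suppose the compact metric space \((K,\rho)\) satisfies \(\int_0^1\sqrt{\log \mathcal N(K,\rho,\varepsilon)}\,\mathrm d\varepsilon<\infty\). Then \(X\) satisfies the CLT in \(C(K)\). Here \(X=h_\xi-J\), and we may take \(\Lambda=2M\), which is constant. The limit is a centred Gaussian element of \(C(K)\) whose covariance equals that of \(X\), namely \(\operatorname{Cov}(F(x_u(t_f,\xi)),F(x_v(t_f,\xi)))\), as claimed. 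Finite-dimensional convergence follows independently from the multivariate CLT, which identifies \(Z\).

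The main step is the entropy bound for \(K=(\mathcal U,d)\). By the proof of \Cref{lem:primitive_control_compactness}, \(U\) is an isometry from \((\mathcal U,d)\) onto \(U(\mathcal U)\subset C(0,t_f;\mathbb R^s)\) with the sup norm. Also \(U(\mathcal U)\) lies in a ball of \(H^1(0,t_f;\mathbb R^s)\) of radius \(\rho\), determined by the \(L^2\)-bound on \(\mathcal U\), and every \(w\in U(\mathcal U)\) satisfies \(w(0)=0\). Functions in this set are uniformly \(1/2\)-Hölder, since \(\|w(t)-w(s)\|_2\le \rho|t-s|^{1/2}\) by Cauchy--Schwarz. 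The classical entropy estimate for Sobolev balls in the sup norm (Birman--Solomyak), or for Hölder balls (Kolmogorov--Tikhomirov), gives \(\log\mathcal N(U(\mathcal U),\|\cdot\|_\infty,\varepsilon)\lesssim \varepsilon^{-1}\). We could also prove this by hand: take a grid of mesh \(\sim\varepsilon^2\), quantize increments at resolution \(\varepsilon\), and count admissible increment sequences using the \(\ell^2\)-budget on the increments, which gives order \(\varepsilon^{-1}\) log-entropy. Any bound \(\varepsilon^{-\alpha}\) with \(\alpha<2\) makes the Dudley integral \(\int_0^1\varepsilon^{-1/2}\,\mathrm d\varepsilon\) finite, so the entropy condition holds and the theorem follows.

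The obstacle we anticipate is precisely this entropy estimate. The cruder bound from plain Hölder-\(1/2\) covering, \(\log\mathcal N\sim\varepsilon^{-2}\), is borderline and not enough. We therefore either invoke the Birman--Solomyak result for \(W^{1,2}\) embedded in \(C\) directly, or carry out the \(\ell^2\)-budget counting argument to reach the \(\varepsilon^{-1}\) rate.
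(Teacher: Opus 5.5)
Your proposal is correct and follows essentially the same route as the paper: you apply the Jain--Marcus $C(K)$-valued CLT to $F(x_u(t_f,\xi))-J(u)$ with the deterministic Lipschitz constant $2L_FC_{\mathcal U,t_f}$ from the stability estimate, and you verify the Dudley entropy condition through the Birman--Solomyak bound $\log\mathscr N(\varepsilon)\lesssim\varepsilon^{-1}$ for the $H^1$-bounded primitives, transferred to $K$ by the isometry $U$. One small correction: drop the parenthetical appeal to Kolmogorov--Tikhomirov for H\"older balls, since for H\"older-$1/2$ balls it gives $\varepsilon^{-2}$ rather than $\varepsilon^{-1}$ (as you note yourself at the end); also, your by-hand counting sketch may only give $\varepsilon^{-1}\log(1/\varepsilon)$ (and should quantize grid values, not increments), which is still enough for the Dudley integral.
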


\begin{proof}
We apply the functional central limit theorem in Theorem~1 of  \cite{Jain1975}
(see also Theorem~14.2 in \cite{Ledoux1991}) to the mapping
\[
[X(\xi)](u)=F(x_u(t_f,\xi))-J(u).
\]

\Cref{lem:primitive_control_compactness} ensures that $K$ is compact.
\Cref{ass:reachable,ass:F-Lipschitz} ensure that $X$ is essentially bounded.
\Cref{ass:F-Lipschitz} ensures that $F$ is Lipschitz continuous with Lipschitz
constant, say $L_F$. Combined with \Cref{lem:endpoint_primitive_stability}, we
obtain the Lipschitz bound
\begin{align}
\label{eq:composite-lipschitz}
|F(x_u(t_f,\xi))-F(x_v(t_f,\xi))|
\le
L_FC_{\mathcal U,t_f} d(u,v),
\qquad
u,v\in\mathcal U,\ \xi\in\Xi .
\end{align}
In particular, we have
\[
|[X(\xi)](u)-[X(\xi)](v)|\le 2L_FC_{\mathcal U,t_f}d(u,v),
\qquad u,v\in \mathcal U, \ \xi\in\Xi.
\]

Using standard arguments (cf., e.g., Example~1.5.1 in \cite{Vaart2023}), it can
be shown that $X$ is a $C(K)$-valued random element. It is also mean zero, as we
subtract the objective $J$. Since the $\xi_i$ are i.i.d.\ random vectors,
$X(\xi_i)$ are i.i.d.\ $C(K)$-valued random elements.

We now verify the entropy condition in Theorem~1 in \cite{Jain1975}, which
requires us to introduce covering numbers. For a metric space $(S,\rho)$, the
covering number $\mathscr{N}(r,S,\rho)$ is the smallest number of balls with
radius $r$, under the metric $\rho$, needed to cover all of $S$. By
\Cref{ass:weakly-compact-feasible-set}, there exists a constant $R_{\mathcal U}
> 0$ such that
\[
U(\mathcal{U}) \subset \mathcal{V} = \big\{
v\in H^1(0,t_f;\mathbb{R}^s):
v(0)=0,\ \|\dot{v}\|_{L^2(0,t_f;\mathbb{R}^s)}\le R_{\mathcal U}
\big\}.
\]
The set $\mathcal{V}$ is bounded in $H^1(0,t_f;\mathbb{R}^s)$. Therefore, the
Sobolev entropy estimate in the corollary on p.~315 in \cite{Birman1967} gives,
for all sufficiently small \(\varepsilon>0\),
\[
\log \mathscr{N}(\varepsilon,\mathcal{V},\|\cdot\|_{C(0,t_f;\mathbb{R}^s)})\lesssim \varepsilon^{-1}.
\]
Combined with the fact that $U$ is an isometry, we obtain, for all sufficiently
small \(\varepsilon>0\),
\[
\log \mathscr{N}(\varepsilon,K,d)\lesssim\varepsilon^{-1}.
\]
Consequently,
\[
\int_0^{1}
\sqrt{\log \mathscr{N}(\varepsilon,K,d)}\,\mathrm{d}\varepsilon
<\infty .
\]
Now, Theorem~1 in \cite{Jain1975} implies the assertions.
\end{proof}

\begin{proof}[{Proof of \Cref{thm:optimal_value_clt}}]
By \Cref{lem:primitive_control_compactness,cor:control_indexed_clt_CK}, the
metric space \(K=(\mathcal U,d)\) is compact and
\(N^{1/2}(\widehat{J}_N-J)\rightsquigarrow Z\) in \(C(K)\), so the arguments in
the proof of Theorem~5.7 in \cite{Shapiro2021} apply verbatim.
\end{proof}

\section{Proof of confidence interval schemes}
\label[appendix]{app:proof-confidence-intervals}

\begin{proof}[{Proof of \Cref{thm:gaussian_ci_unique_optimizer}}]
We define $\sigma^2 = \operatorname{Var}(F(x_{u^*}(t_f,\xi)))$. We first show
that, with probability one, $\widehat{\sigma}_N^2\to\sigma^2$. Using the
Lipschitz bound \eqref{eq:composite-lipschitz}, the fact that $K = (\mathcal{U},
d)$ is compact, and the uniform law of large numbers (see Corollary 4:1 in
\cite{LeCam1953}), we obtain, with probability one,
\[
\sup_{u\in\mathcal U}|\widehat{J}_N(u)-J(u)|\to0.
\]
Since \(J\) is continuous on compact \((\mathcal U,d)\) and has the unique
minimizer \(u^*\), the arguments from the proof of Theorem~5.3 in
\cite{Shapiro2021} imply that, with probability one, \(\widehat{J}_N^*\to J^*\)
and \(d(u_N^*,u^*)\to0\).

Next, since \(x_u(t_f,\xi)\in\mathcal X\) and \(F\) is continuous on the compact
set \(\mathcal X\), \(B_F=\sup_{x\in\mathcal X}|F(x)|<\infty\). Combined with
the Lipschitz bound \eqref{eq:composite-lipschitz}, with probability one,
\[
\begin{aligned}
\bigg|
\frac1N\sum_{i=1}^N F(x_{u_N^*}(t_f,\xi_i))^2
-
\frac1N\sum_{i=1}^N F(x_{u^*}(t_f,\xi_i))^2
\bigg|
\le
2B_F L_FC_{\mathcal U,t_f}  d(u_N^*,u^*)
\to0.
\end{aligned}
\]
The strong law of large numbers gives, with probability one,
$
(1/N)\sum_{i=1}^N F(x_{u^*}(t_f,\xi_i))^2
\to
\mathbb E[F(x_{u^*}(t_f,\xi))^2]
$.
Hence, with probability one,
\[
\frac1N\sum_{i=1}^N F(x_{u_N^*}(t_f,\xi_i))^2
\to
\mathbb E[F(x_{u^*}(t_f,\xi))^2].
\]
Finally, with probability one,
$
\widehat{\sigma}_N^2
\to
\mathbb E[F(x_{u^*}(t_f,\xi))^2]
-
\mathbb E[F(x_{u^*}(t_f,\xi))]^2
=
\sigma^2
$.

Since \(\mathcal U^*=\{u^*\}\), \Cref{thm:optimal_value_clt} gives
$
N^{1/2}(\widehat{J}_N^*-J^*)
\rightsquigarrow
\mathcal{N}(0,\sigma^2)
$.
Since \(\widehat{\sigma}_N\to\sigma>0\) in probability, Slutsky's theorem gives
$\widehat\sigma_N^{-1}N^{1/2}(\widehat{J}_N^*-J^*) \rightsquigarrow
\mathcal{N}(0,1)$. The coverage follows from continuity of the standard normal
distribution at \(\pm z_{1-\beta/2}\).
\end{proof}

\begin{proof}[Proof of \Cref{thm:subsampling_ci}]
By \Cref{thm:optimal_value_clt},
$
N^{1/2}(\widehat J_N^*-J^*)\rightsquigarrow \inf_{u\in\mathcal U^*} Z(u)
$.
Since \(b_N\to\infty\), \(b_N/N\to0\), and \(m_N\to\infty\), Corollary~2.1 in
\cite{Politis1994}, in the stochastic programming form of Section~5.3 in
\cite{Eichhorn2007}, implies that the empirical law of
$
b_N^{1/2}(\widehat J_{I_r}^*-\widehat J_N^*)
$
consistently estimates the law of \(\inf_{u\in\mathcal U^*} Z(u)\) at continuity
points. 
Since, by assumption, this limiting law has no atom at its lower
\(\beta/2\)- and \((1-\beta/2)\)-quantiles, 
Corollary~2.1 of \cite{Politis1994}, applied to the two tails, yields
$
\lim_{N\to\infty}
\mathrm{Prob}
\bigl\{
J^*\in\mathcal I_{N,b_N,m_N}^{\mathrm{sub}}(\beta)
\bigr\}
= 1-\beta
$.
\end{proof}

\section{Coverage of a Student-$t$ plug-in interval}
\label{app:student-t-coverage}

In addition to the coverage analysis for the plug-in confidence intervals
in \Cref{table:fed-batch-coverage}, 
we  assess a finite-sample modification of the plug-in confidence
interval. Using the variance estimator $\widehat\sigma_N^2$ from 
\Cref{alg:plugin_ci_unique_optimizer}, we define
\begin{equation}
\label{eq:student-t-plug-in}
\mathcal{I}_N^t=
\Big[
\widehat J_N^*
- 
t_{N-1,\,1-\beta/2}\frac{\widehat{s}_N}{N^{1/2}}, 
\quad
\widehat J_N^*
+
t_{N-1,\,1-\beta/2}\frac{\widehat{s}_N}{N^{1/2}}
\Big],
\end{equation}
where
$\widehat{s}_N^2=(N/(N-1))\widehat\sigma_N^2$, 
and $t_{N-1,\,1-\beta/2}$ is the $(1-\beta/2)$-quantile of the
Student-$t$ distribution with $N-1$ degrees of freedom.
This modification preserves the asymptotic coverage guarantee of
\Cref{thm:gaussian_ci_unique_optimizer}, since
$t_{N-1,\,1-\beta/2}\to z_{1-\beta/2}$ as $N\to\infty$.

We use the same replications, fixed reference value, and coverage
estimators as in \Cref{table:fed-batch-coverage}.
The results in \Cref{table:fed-batch-coverage-t} show coverage closer
to the nominal levels than for the normal plug-in interval,
particularly for small $N$, although undercoverage remains in
several settings.

\begin{table}[t]
  \centering
\caption{Estimated coverage of the Student-$t$ plug-in confidence
interval from \eqref{eq:student-t-plug-in} for the fed-batch reactor, using the same replications,
reference value, and reporting conventions as in
\Cref{table:fed-batch-coverage}.}
  \begin{tabular}{rcccccc}
    \toprule
     & \multicolumn{2}{c}{$1-\beta = 0.90$} & \multicolumn{2}{c}{$1-\beta = 0.95$} & \multicolumn{2}{c}{$1-\beta = 0.99$} \\
    \cmidrule(lr){2-3}\cmidrule(lr){4-5}\cmidrule(lr){6-7}
    $N$ & $L/R$ & $\widehat p_{R,10^{-6}}(L)$ & $L/R$ & $\widehat p_{R,10^{-6}}(L)$ & $L/R$ & $\widehat p_{R,10^{-6}}(L)$ \\
    \midrule
    8 & 0.904 & 0.889 & 0.950 & 0.939 & 0.989 & 0.984 \\
    16 & 0.900 & 0.886 & 0.948 & 0.937 & 0.988 & 0.982 \\
    32 & 0.898 & 0.883 & 0.950 & 0.939 & 0.990 & 0.984 \\
    64 & 0.896 & 0.880 & 0.947 & 0.935 & 0.989 & 0.983 \\
    128 & 0.887 & 0.871 & 0.944 & 0.932 & 0.988 & 0.982 \\
    \bottomrule
  \end{tabular}
  \label{table:fed-batch-coverage-t}
\end{table}

\bibliography{statistical-limit-theorems-optimal-control-under-uncertainty.bbl}

\end{document}